\theoremstyle{break}
\newtheorem{thm}{Theorem}[section]
\newtheorem{rem}[thm]{Remark}
\newtheorem{lemma}[thm]{Lemma}
\newtheorem{prop}[thm]{Proposition}
\theoremstyle{nonumberplain}
\newtheorem{proof}{Proof}
\newcommand*\DeclareMathSymbolShorthand[2]{
   \begingroup
   \setkeys{DMSS}{name=#2,#1}%
   \if\DMSS@overwrite 
   \else
      \expandafter\@ifdefinable\csname \DMSS@prefix\DMSS@name\endcsname{%
        \def\DMSS@overwrite{00}
      }%
   \fi%
   \if\DMSS@overwrite 
      \expandafter\@firstofone%
   \else\expandafter\@gobble\fi%
   {\protected\expandafter%
        \xdef\csname \DMSS@prefix\DMSS@name \endcsname{%
        \unexpanded\expandafter{\DMSS@format{#2}}}}%
   \endgroup}
   \edef\DMSS@overwrite{\csname if#1\endcsname 00\else 01\fi}}
\newcommand\MakeDeclareMathSetCommand[3]{%
   \expandafter\MakeDeclareShorthandCommandAux\csname math#2format\endcsname
   {#1}{#2}{#3}}
\def\MakeDeclareShorthandCommandAux#1#2#3#4{%
   \newcommand*#1{#4}%
   \newcommand*#2[2][]{%
      \DeclareMathSymbolShorthand{format=#1,prefix=#3,##1}{##2}%
   }}
\MakeDeclareMathSetCommand{\DeclareMathSet}{numbers}{\mathbb}
\def\blfootnote{\xdef\@thefnmark{}\@footnotetext}
\MakeDeclareMathSetCommand{\DeclareMathNumbers}{numb}{\mathbb}
\MakeDeclareMathSetCommand{\DeclareMathGroup}{group}{\mathrm}
\newcommand{\U}[0]{\mathbf{U}}
\newcommand{\GL}[0]{\mathbf{GL}}
\newcommand{\SetH}[0]{\mathds{H}}
\newcommand{\tr}{\text{tr}}
\newcommand{\T}{\mathcal{T}}
\newcommand{\M}{\mathcal{M}}
\renewcommand{\phi}{\varphi}
\renewcommand{\epsilon}{\varepsilon}
\DeclareMathOperator{\Id}{Id}
\newcommand{\ra}{\rightarrow}
\title{An explicit Ricci potential for the Universal Moduli Space of Vector Bundles}
\author{Jørgen Ellegaard Andersen and Niccolo Skovgård Poulsen}
\begin{document}
\blfootnote{Research supported in part by the center of excellence grant ``Center for Quantum Geometry of Moduli Spaces (QGM)'' DNRF95 from the Danish National Research Foundation.}

\maketitle
\begin{abstract}
In this paper we modify the coordinate construction in \cite{AP} on the universal moduli space of pair consisting of a Riemann Surfaces and a stable holomorphic bundles on the Riemann Surface, so as to produce a new set of coordinates, which are in fact K\" ahler coordinates on this universal moduli space. Further, we give a functional determinant formula for the Ricci potential of the universal moduli space.
\end{abstract}

\begin{center}
\thanks{Dedicated to Nigel Hitchin at the conference {\em Hitchin70},\\ celebrating his 70'th Birthday.}
\end{center}

\section{Introduction}
In this paper we modify the coordinates construction from  \cite{AP} for $\M'$, the smooth part of the universal moduli space of pairs of a Riemann surface and a rank $n$ and degree $k$ holomorphic vector bundle on the Riemann surface. The coordinates in \cite{AP} are not K{\"a}hler coordinates. To construct coordinates in a neighborhood of any $(X,E)\in \M'$, we follow the method presented in \cite{AP}, which in tern is a generalisation of the pioneering coordinate constructions of Takhtajan and Zograf \cite{ZTVB, ZTPVB,ZTpuncRie, ZTRie},  namely we seek a smooth family of principal bundle maps
$$\chi^{\mu\oplus\nu} : \SetH\times\mathbf{GL}(n,\numbC)\to\SetH\times\mathbf{GL}(n,\numbC)$$ 
parametrised by $\mu\oplus\nu$ in a small neigborhood of $0$ in $H^1(X,TX)\oplus H^1(X,\text{End}E)$. For each $\mu\oplus\nu$ we the get a corresponding point in $\M'$ given by the formula
  \begin{align}
\label{eqeq}
   (\rho_\SetH^{\mu\oplus\nu},\rho_E^{\mu\oplus\nu})(\gamma)=(\chi^{\mu\oplus\nu}\circ \gamma) (\rho_\SetH,\rho_E)(\gamma)(\chi^{\mu\oplus\nu})^{-1}.
  \end{align}
 As a smooth manifolds, we recall that
$$ 
{\mathcal M}' = \mathcal{T}\times M',
$$
by the Narasimhan-Seshadri theorem, where $\mathcal{T}$ is Teichm\"{u}ller space and $M'$ is the moduli space of flat irreducible $U(n)$-connections with holonomy $e^{2\pi i k/n}\Id$ around a marked point on the surface and say $(\rho_{\mathbb H},\rho_E)$ correspond to $(X,E)$ under this identification.
Hence, we see that there is a natural symplectic structure on ${\mathcal M}'$, namely
$$\omega_{{\mathcal M}'} = p_{\mathcal T}^*\omega_{\mathcal{T}} + p_{M'}^*\omega_{M'},$$
where $ \omega_{\mathcal{T}}$ is the Weil-Petersen symplectic form on $\mathcal{T}$, $\omega_{M'}$ is the Seshadri-Atiyah-Bott-Goldman symplectic form on $M'$, $p_{\mathcal T}$ is the projection onto ${\mathcal T}$ and $p_{M'}$ is the projection onto $M'$.

In this paper we prove the following theorem, which gives us local coordinates around any pair $(\rho_{\mathbb H},\rho_E)\in {\mathcal T}\times M'$.
\begin{thm}
\label{thm:uni coord}
For all sufficiently small $\mu\oplus\nu\in H^{1}(X, TX)\oplus H^{1}(X,\text{End}E)$ there exist a unique bundle map $\chi^{\mu\oplus\nu}$ such that
\begin{enumerate}
\label{thm:uni coord}
\item The bundle map $\chi^{\mu\oplus\nu}$ solves 
\begin{align}\label{eqM}
  \bar\partial_\SetH \chi^{\mu\oplus\nu}=(\mu-\frac{1}{2}\tilde g^{-1}_X\tr\nu\otimes\nu)\cdot\partial_\SetH\chi^{\mu\oplus\nu}+\partial_{\mathbf{GL}(n,\numbC)}\chi^{\mu\oplus\nu}\cdot\nu
\end{align} 
where $\nu$ is considered a left-invariant vector field on $\GL(n,\numbC)$ at each point in $\SetH$ and  $\tilde{g}_X$ is the hyperbolic metric of $X$.
\item The base map extends to the boundary of $\SetH$ and fixes $0,1$ and $\infty$.
\item The pair of representations $(\rho_\SetH^{\mu\oplus\nu},\rho^{\mu\oplus\nu}_E)$ defined by equation (\ref{eqeq}) represents a point in $\mathcal{T}\times M'$.
\item $p_{\GL(n,\numbC)}(\chi^{\mu\oplus\nu}(z_0,e))$ has determinant $1$ and is positive definite. 
\end{enumerate}
\end{thm}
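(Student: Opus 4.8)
The plan is to split equation (\ref{eqM}) into its $\SetH$- and $\GL(n,\numbC)$-components and solve them in turn, following the scheme of \cite{AP} but with the Beltrami coefficient $\mu$ replaced throughout by $\tilde\mu:=\mu-\tfrac12\tilde g_X^{-1}\tr(\nu\otimes\nu)$. Writing a principal bundle map as $\chi^{\mu\oplus\nu}(z,g)=(f(z),\phi(z)g)$ with $f\colon\SetH\to\SetH$ and $\phi\colon\SetH\to\GL(n,\numbC)$, the $\SetH$-component of (\ref{eqM}) is the Beltrami equation $\bar\partial f=\tilde\mu\,\partial f$. Here $\tr(\nu\otimes\nu)$ is a $(0,2)$-form (a section of $\bar K_X^{2}$) and $\tilde g_X^{-1}$ contracts it to a $(-1,1)$-form, so $\tilde\mu$ is again a Beltrami differential; it is $\rho_\SetH$-invariant because $\nu$ and $\tilde g_X$ are, and $\|\tilde\mu\|_\infty\le\|\mu\|_\infty+C\|\nu\|_\infty^{2}<1$ for $\mu\oplus\nu$ small. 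The measurable Riemann mapping theorem of Ahlfors--Bers then yields a unique quasiconformal $f=f^{\mu\oplus\nu}$ that extends to $\partial\SetH$ and fixes $0,1,\infty$ --- this gives (2) and the base part of (1) --- and the standard computation ($\tilde\mu$ being $\rho_\SetH$-invariant) shows $f\circ\gamma\circ f^{-1}$ is M\"obius for $\gamma\in\rho_\SetH(\pi_1)$, so $\rho_\SetH^{\mu\oplus\nu}:=f\,\rho_\SetH\,f^{-1}$ is a Fuchsian representation, i.e.\ a point of $\T$.

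With $f$ fixed, the $\GL(n,\numbC)$-component of (\ref{eqM}), along the identity section, is the linear first-order system $\bar\partial\phi=\tilde\mu\,\partial\phi+\phi\,\nu$ for $\phi$. The change of variable $w=f(z)$ turns $\bar\partial-\tilde\mu\partial$ into a nonvanishing multiple of $\bar\partial_w$, so $\psi:=\phi\circ f^{-1}$ must satisfy $\bar\partial_w\psi=\psi\,\tilde\nu$, where $\tilde\nu$ is the $\mathfrak{gl}(n,\numbC)$-valued $(0,1)$-form on $\SetH$ obtained from $\nu$; equivalently, $\psi^{-1}$ is a holomorphic frame, over $\SetH$, for the deformed holomorphic bundle $(E,\bar\partial_E+\nu)$. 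Such a $\psi$ --- equivariant with respect to the transported $\pi_1$-action, so that $\chi^{\mu\oplus\nu}$ descends to the quotient --- is produced by the Cauchy--Beurling-type integral operators used in \cite{AP}, with the norm estimates and holomorphic dependence on $\mu\oplus\nu$ that the later K\"ahler statements will require; up to left multiplication by a constant invertible matrix, it is unique. Since $\phi$ is $\GL(n,\numbC)$-valued, $\chi^{\mu\oplus\nu}(z,g)=(f(z),\phi(z)g)$ is a bundle diffeomorphism solving (\ref{eqM}), and condition (4) --- that $p_{\GL(n,\numbC)}(\chi^{\mu\oplus\nu}(z_0,e))=\phi(z_0)$ be positive definite of determinant one --- removes the residual constant ambiguity, giving uniqueness of $\chi^{\mu\oplus\nu}$.

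It remains to establish (3) on the bundle side. Because $\tilde\mu$ and $\nu$ are pulled back from $X$, they are invariant under the reference deck action $(z,g)\mapsto(\rho_\SetH(\gamma)z,\rho_E(\gamma)g)$; hence $\chi^{\mu\oplus\nu}$ and its $\gamma$-translate solve (\ref{eqM}) with the same data, and the uniqueness just recorded forces $\chi^{\mu\oplus\nu}\circ(\rho_\SetH(\gamma),\rho_E(\gamma))\circ(\chi^{\mu\oplus\nu})^{-1}$ to be M\"obius on the base and a constant matrix on the fibre. These are the maps $(\rho_\SetH^{\mu\oplus\nu},\rho_E^{\mu\oplus\nu})(\gamma)$ of (\ref{eqeq}), and a conjugate of a homomorphism is a homomorphism, so we obtain an honest pair of representations. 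Finally, for $\nu$ small $(E,\bar\partial_E+\nu)$ is a stable holomorphic bundle with the same numerical invariants and the same central holonomy at the marked point (stability is an open condition), so by the Narasimhan--Seshadri theorem it represents a point of $M'$; together with $\rho_\SetH^{\mu\oplus\nu}\in\T$ this places $(\rho_\SetH^{\mu\oplus\nu},\rho_E^{\mu\oplus\nu})$ in $\T\times M'$.

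The main obstacle I anticipate is the fibre step: one must solve the coupled first-order system for $\phi$ with estimates uniform in $\mu\oplus\nu$ and with precisely the equivariance that makes the fibre part of (\ref{eqeq}) a constant matrix, so that $\rho_E^{\mu\oplus\nu}$ is genuinely a representation landing in $M'$. Everything else is either a black box --- Ahlfors--Bers, Narasimhan--Seshadri, openness of stability --- or formal, and the only change relative to \cite{AP}, namely $\mu\rightsquigarrow\tilde\mu$, is harmless at this stage: it perturbs the Beltrami coefficient only at order $\|\nu\|^{2}$, so it affects none of the solvability or uniqueness arguments and is felt only in the second-order jet of the coordinates --- which is precisely where the K\"ahler property will come from.
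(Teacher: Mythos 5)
Your overall skeleton matches the paper's: split (\ref{eqM}) into a base equation and a fibre equation, solve the base by Ahlfors--Bers applied to the modified Beltrami differential $\tilde\mu=\mu-\tfrac12\tilde g_X^{-1}\tr\nu\otimes\nu$ (your observation that $\|\tilde\mu\|_\infty<1$ for small data and that $\tilde\mu$ is $\rho_\SetH$-invariant is exactly what is needed there), and then solve a $\bar\partial$-problem on the fibre and normalise at $z_0$. The base part and condition (2) are fine.

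The fibre step, however, contains two genuine gaps. First, your uniqueness claim is wrong: two solutions of $\bar\partial_w\psi=\psi\tilde\nu$ on $\SetH$ differ by left multiplication by an arbitrary \emph{holomorphic} $\GL(n,\numbC)$-valued map, not by a constant matrix (if $\psi_1,\psi_2$ both solve it, then $\bar\partial(\psi_1\psi_2^{-1})=0$ and nothing more). Consequently your argument for (3) --- that uniqueness forces $\chi^{\mu\oplus\nu}\circ(\rho_\SetH(\gamma),\rho_E(\gamma))\circ(\chi^{\mu\oplus\nu})^{-1}$ to be constant on the fibre --- does not go through; a generic solution only makes the fibre monodromy $\psi(\gamma'w)\rho_E(\gamma)\psi(w)^{-1}$ holomorphic in $w$. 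The paper avoids this by taking the \emph{antiholomorphic} solution $\chi_-^{\mu\oplus\nu}$ of $\bar\partial\chi_-=\chi_-\nu$: since $\nu$ is harmonic (hence $\partial$-closed as well as $\bar\partial$-closed), $d+\nu$ is a flat connection and $\chi_-$ is its trivialisation, so the monodromy matrices are honestly constant. Second, even with constant monodromy the resulting representation $r^{\mu\oplus\nu}$ is a priori $\GL(n,\numbC)$-valued, whereas a point of $M'$ is a flat irreducible $U(n)$-representation with prescribed holonomy at the marked point. Saying that the deformed bundle ``represents a point of $M'$'' via Narasimhan--Seshadri only identifies its isomorphism class; the theorem requires the specific pair produced by (\ref{eqeq}) to be admissible. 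This is where stability is really used: one must compose with a holomorphic gauge transformation $\chi_+^{\mu\oplus\nu}$ on the deformed surface $X_{\mu\oplus\nu}$ that conjugates $r^{\mu\oplus\nu}$ into its unitary Narasimhan--Seshadri representative, and only \emph{that} solution is unique up to a constant (unitary scalar) matrix, which condition (4) then kills. You correctly flag the fibre step as the main obstacle, but the mechanism you propose for resolving it does not work as stated.
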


We remark that the conditions in this theorem are identical to the ones in Theorem 1.1 in \cite{AP}, except for the term $-\frac{1}{2}\tilde g^{-1}_X\tr\nu\otimes\nu$ in equation (\ref{eqM}). We establish that even though we add this term, we still get complex coordinates in a neigborhood of $(\rho_{\mathbb H},\rho_E)$, and a further calculation show that these are indeed K\"ahler coordinates as opposed to the coordinates introduced in \cite{AP}\footnote{We would like to thank Peter Zograf for asking us if the coordinates in \cite{AP} are K\"{a}hler or not.}. We summarice this in the following theorem.

\begin{thm}
For all pairs $(\rho_{\mathbb H},\rho_E)\in {\mathcal T}\times M'$, the coordinates
$$(\mu,\nu) \mapsto (\rho_\SetH^{\mu\oplus\nu},\rho^{\mu\oplus\nu}_E),$$
for $(\mu,\nu)$ runing in a certain open neighbourhood of zero in $H^{1}(X, TX)\oplus H^{1}(X,{\rm End}E)$ are local K\"{a}hler coordinates on  $({\mathcal M}', \omega_{{\mathcal M}'})$.
\end{thm}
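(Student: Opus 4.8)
The plan is to show that, in the chart $F\colon \mu\oplus\nu\mapsto(\rho_\SetH^{\mu\oplus\nu},\rho^{\mu\oplus\nu}_E)$ provided by Theorem \ref{thm:uni coord}, the two-form $\omega_{\mathcal{M}'}$ pulls back to a positive form of type $(1,1)$ for the standard complex structure on the parameter space $H^{1}(X,TX)\oplus H^{1}(X,\text{End}E)$. Since $\omega_{\mathcal{M}'}=p_{\mathcal T}^*\omega_{\mathcal{T}}+p_{M'}^*\omega_{M'}$ is a sum of pullbacks of closed forms it is automatically closed, so once $F^*\omega_{\mathcal{M}'}$ is known to be a positive $(1,1)$-form the $\partial\bar\partial$-lemma yields a local potential $\Phi$ with $F^*\omega_{\mathcal{M}'}=i\partial\bar\partial\Phi$, which is exactly the statement that $F$ is a K\"ahler coordinate chart. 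First I would recall, parallel to \cite{AP}, the proof that the modified map $F$ is still a complex-analytic chart for $\mathcal{M}'$ (this is the assertion ``we still get complex coordinates'' made above), observing that the new term $-\tfrac12\tilde g_X^{-1}\tr\nu\otimes\nu$ in (\ref{eqM}) is a fixed multiple of $\tr(\nu\otimes\nu)$, hence a holomorphic function of $\nu$ with values in Beltrami differentials; adding it therefore amounts to precomposing the construction of \cite{AP} with a holomorphic reparametrisation and does not disturb complex-analyticity, nor conditions (2)--(4).

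Next I would compute $F^*\omega_{\mathcal{M}'}=F^*p_{\mathcal T}^*\omega_{\mathcal{T}}+F^*p_{M'}^*\omega_{M'}$ term by term. For the Teichm\"uller piece, $p_{\mathcal T}\circ F$ sends $\mu\oplus\nu$ to the point of $\mathcal{T}$ represented by the boundary map of $\chi^{\mu\oplus\nu}$, whose Beltrami coefficient is $\mu-\tfrac12\tilde g_X^{-1}\tr\nu\otimes\nu$; thus $p_{\mathcal T}\circ F$ is the Takhtajan--Zograf coordinate evaluated at this shifted Beltrami coefficient, and its Weil--Petersson pullback is, up to the holomorphic shift, the Weil--Petersson Hermitian pairing on $H^{1}(X,TX)$ --- a manifestly positive $(1,1)$-form. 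For the bundle piece, the restriction of $p_{M'}\circ F$ to $\mu=0$ is an Atiyah--Bott/Narasimhan--Seshadri chart of the fibre moduli space over $X$, and its pullback of $\omega_{M'}$ contributes the $L^{2}$ (Petersson) pairing on $H^{1}(X,\text{End}E)$, again a positive $(1,1)$-form. The genuinely new feature is the coupling between the $\mu$- and $\nu$-directions, i.e.\ the way a Beltrami deformation of $X$ displaces the point of $M'$; this is governed by the second-order Taylor coefficient of $\chi^{\mu\oplus\nu}$, which one extracts by differentiating (\ref{eqM}) twice at the origin and inverting the relevant $\bar\partial$-type operators on $\SetH\times\GL(n,\numbC)$, and it is precisely here that the term $-\tfrac12\tilde g_X^{-1}\tr\nu\otimes\nu$ enters.

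The heart of the matter --- and the step I expect to be the main obstacle --- is to check that after assembling the two pieces the $(2,0)$- and $(0,2)$-parts of $F^*\omega_{\mathcal{M}'}$ vanish identically on the chart. In the coordinates of \cite{AP} this $(2,0)$-part is nonzero: deforming the complex structure of $X$ does not move the holomorphic structure of $E$ along a purely $(1,0)$ direction, and the defect is visible as a holomorphic quadratic differential paired against $\tr(\nu\otimes\nu)$. The content to be verified is that the extra term in (\ref{eqM}) contributes to the base component of $F$ exactly the second-order correction whose Weil--Petersson pairing with the Teichm\"uller directions cancels this defect, so that only the Hermitian cross-terms survive. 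Carrying this out requires the second-order variational formulas together with the Stokes/integration-by-parts identities relating the Weil--Petersson and $L^{2}$ pairings to the boundary and holonomy data of $\chi^{\mu\oplus\nu}$ appearing in (\ref{eqeq}); this bookkeeping is the technical core of the argument. Once the $(2,0)$-part is shown to vanish, positivity of $F^*\omega_{\mathcal{M}'}$ holds on a possibly smaller neighbourhood of the origin by continuity, since at the center it reduces to the direct sum of the (positive) Weil--Petersson and $L^{2}$ pairings. Alternatively, and more in keeping with the rest of the paper, one may side-step the type computation by writing down an explicit local potential $\Phi$ --- assembled from a Takhtajan--Zograf/Liouville-type potential in the $\mathcal{T}$-directions and a Quillen-type potential in the $M'$-directions --- and verifying $i\partial\bar\partial\Phi=F^*\omega_{\mathcal{M}'}$ directly; this simultaneously proves the theorem and sets up the Ricci-potential computation.
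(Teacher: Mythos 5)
Your proposal proves the wrong statement. In this paper ``K\"ahler coordinates'' does not mean ``a holomorphic chart in which $\omega_{{\mathcal M}'}$ pulls back to a closed positive $(1,1)$-form with a local potential'' --- that holds automatically in \emph{any} holomorphic chart on a K\"ahler manifold, in particular in the coordinates of \cite{AP}, which the authors explicitly say are \emph{not} K\"ahler coordinates. What is meant, and what the paper proves, is the normal-coordinates property: the metric coefficients in the chart satisfy $g_{i\bar j}=\delta_{ij}+O(|z|^2)$, i.e.\ all first derivatives of the metric tensor vanish at the centre of the chart. This property is not invariant under holomorphic coordinate changes (a quadratic holomorphic change alters the first derivatives of $g_{i\bar j}$ at the centre), so your observation that the extra term $-\tfrac12\tilde g_X^{-1}\tr\nu\otimes\nu$ amounts to precomposing the \cite{AP} construction with a holomorphic reparametrisation --- true, and indeed how the paper later transfers second-order formulas from \cite{AP} --- shows precisely that your criterion cannot distinguish the new chart from the old one. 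Likewise your expectation that the $(2,0)$-part of $F^*\omega_{{\mathcal M}'}$ is nonzero for the \cite{AP} coordinates is mistaken: the pullback of a $(1,1)$-form under a biholomorphic chart is always of type $(1,1)$, so there is no $(2,0)$-defect to cancel.

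The actual content is a first-order computation of the metric along rays through the origin. One first computes the Kodaira--Spencer map at a nearby point $\mu\oplus\nu$, which produces the shifted Beltrami differential $\mu_1-\tilde g^{-1}_{X}\tr\nu_1\nu$ in the Teichm\"uller direction and the term $(\mu_1-\tilde g^{-1}_{X}\tr\nu_1\nu)\chi_2^{-1}\partial\chi_2$ in the bundle direction; one then inserts this into the $L^2$-expression for $g_{\mu\oplus\nu}$ and differentiates at $\epsilon=0$ along $\epsilon(\mu\oplus\nu)$, using Lemma~\ref{lemchiprop} (namely $\frac{d}{d\epsilon}\vert_{\epsilon=0}\bigl((\overline{\chi_2^{\epsilon(\mu\oplus\nu)}})^T\chi_2^{\epsilon(\mu\oplus\nu)}\bigr)=0$ and $\frac{d}{d\epsilon}\vert_{\epsilon=0}\partial\chi_2^{\epsilon(\mu\oplus\nu)}=-\bar\nu^T$) to see that the only surviving contributions are
$i\int_\Sigma(-\tilde g^{-1}_{X}\tr\nu_1\nu)\bar\mu_2\,\tilde g_{X}+i\int_\Sigma\tr(\nu_1\wedge\overline{\mu_2\bar\nu^T}^T)$,
and these two cancel exactly because of the coefficient $-\tfrac12$ in the new term of equation (\ref{eqM}). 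This cancellation is the theorem; it does not appear anywhere in your outline, so the essential step is missing.
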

\begin{rem}
  If we consider our coordinates based at a point where $\det E=\mathcal{O}$ then restricting the construction of coordiantes to $H^{1}(X, TX)\oplus H^{1}(X,{\rm Ad}E)$ will give coordinates on the moduli space of pairs of Riemann surfaces and holomorphic vector bundles with trivial determinant, ${\mathcal M}'_{\mathcal{O}}$.
\end{rem}
Let us denote the K\"{a}hler metric on $({\mathcal M}', \omega_{{\mathcal M}'})$ by $g_{{\mathcal M}'}$. The Ricci $(1,1)$-form of this metric is denote by $\text{Ric}^{1,1}$. Further we let $\Delta_0$ denote the Laplacian acting on $1$-forms on $X$ and let $\Delta_{\text{Ad}E}$ be the Laplacian acting on $1$-forms twisted by the bundle $\text{Ad}E$ with the flat connection induced from $\rho_E$.
We now consider the function $F\in C^\infty({\mathcal M}')$ given by
\begin{equation}\label{RP}
F(\rho_{\mathbb H},\rho_E) = \frac{1}{2}\log\det\Delta_{\text{Ad}E}\det\Delta_0.
\end{equation}

The second main result of this paper is that $F$ is a Ricci potential 
of $({\mathcal M}'_{\mathcal{O}}, g_{{\mathcal M}_{\mathcal{O}}'})$.
\begin{thm}\label{thmRP}
  The function $F$ is a Ricci potential for the metric $g_{{\mathcal M}'}$ on $\mathcal{M}_{\mathcal{O}}'$, e.g. it  fulfils the following equation 
  \begin{align*}
    2i\partial\bar\partial F=\text{Ric}^{1,1}-\frac{n}{2\pi}\omega_{M'}-\frac{n^2}{12\pi}\omega_{\mathcal{T}}
  \end{align*}
In particular, the cohomology of the Ricci form is $\frac{n}{2\pi}\omega_{M'}+\frac{n^2}{12\pi}\omega_{\mathcal{T}}$.
\end{thm}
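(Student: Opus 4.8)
The plan is to compute both sides of the asserted identity
\[
2i\partial\bar\partial F=\text{Ric}^{1,1}-\frac{n}{2\pi}\omega_{M'}-\frac{n^2}{12\pi}\omega_{\mathcal{T}}
\]
by reducing everything to known anomaly-type formulas for families of $\bar\partial$-operators. Concretely, I would proceed in three stages.

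\textbf{Stage 1: the variation of $F$ and Quillen-type formulas.}
The function $F$ is (up to the factor $\tfrac12$) the logarithm of a product of two Ray–Singer-type zeta-regularised determinants, one for the Laplacian on functions/$1$-forms $\Delta_0$ on $X$ (depending only on the point of $\mathcal{T}$) and one for $\Delta_{\text{Ad}E}$ (depending on the full pair $(\rho_{\mathbb H},\rho_E)$). The first step is to express $2i\partial\bar\partial$ of each factor via the local families index theorem / Bismut–Gillet–Soulé (or, more elementarily, the Takhtajan–Zograf local index theorem used already in \cite{ZTVB,ZTPVB}). For the $\Delta_0$ factor this is the classical computation that $\partial\bar\partial\log\det\Delta_0$ is a multiple of the Weil–Petersson form on $\mathcal{T}$; I would cite this and pin down the constant so that it produces exactly the $\tfrac{n^2}{12\pi}\omega_{\mathcal T}$ term after multiplication by $n^2$ coming from $\text{rk}(\text{Ad}E)=n^2$ — this is where the $1/12$ (a Bernoulli/$\zeta(-1)$ artefact) enters. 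For the $\Delta_{\text{Ad}E}$ factor I would use the corresponding local index formula over $\mathcal{M}'$: its $\partial\bar\partial$ is a universal characteristic-class expression in the curvature of the determinant-of-cohomology line bundle of $\text{Ad}E$ together with the relative tangent bundle, which decomposes into a piece proportional to $\omega_{M'}$ (the Quillen curvature of the moduli of flat connections, giving the $\tfrac{n}{2\pi}\omega_{M'}$ term — here $\text{Ad}E$ rather than $\text{End}E$ is what makes the leading constant $n$, consistent with the restriction to $\mathcal M'_{\mathcal O}$) and a piece proportional to $\omega_{\mathcal T}$.

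\textbf{Stage 2: the Ricci form of $g_{\mathcal M'}$.}
In parallel I would compute $\text{Ric}^{1,1}$ directly from the explicit Kähler coordinates $(\mu,\nu)$ furnished by Theorem~\ref{thm:uni coord} and the preceding theorem. Since $\text{Ric}^{1,1}=-i\partial\bar\partial\log\det g_{\mathcal M'}$, the task is to identify the determinant of the metric in these coordinates. Here I would exploit the product structure $\mathcal{M}'=\mathcal{T}\times M'$ only at the level of the symplectic form; the metric does not split, but the first-order variational formula for $\chi^{\mu\oplus\nu}$ coming from differentiating \eqref{eqM} at $\mu\oplus\nu=0$ lets one write the metric tensor as an $L^2$-pairing of harmonic representatives, plus correction terms governed by the new quadratic term $-\tfrac12\tilde g_X^{-1}\tr\nu\otimes\nu$. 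The point of that term — and the reason these are Kähler coordinates where those of \cite{AP} were not — is precisely that it makes $\log\det g_{\mathcal M'}$ differ from $2F$ by a pluriharmonic function plus the explicit curvature terms. So Stage 2 amounts to: differentiate \eqref{eqM} twice, assemble $\partial_a\partial_{\bar b} g_{\mathcal M'}$, and recognise the answer.

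\textbf{Stage 3: matching.}
Finally I would combine Stages 1 and 2: $\text{Ric}^{1,1}=-i\partial\bar\partial\log\det g_{\mathcal M'}$, and the variational computation shows $\log\det g_{\mathcal M'}=4F+(\text{pluriharmonic})+c_1(\text{cohomological/ characteristic terms})$; applying $2i\partial\bar\partial$ kills the pluriharmonic part and leaves exactly $2i\partial\bar\partial F$ plus the curvature classes, which after bookkeeping of constants yields the stated formula. The last sentence of the theorem (the cohomology class of $\text{Ric}^{1,1}$ is $\tfrac{n}{2\pi}\omega_{M'}+\tfrac{n^2}{12\pi}\omega_{\mathcal T}$) is then immediate, since $2i\partial\bar\partial F$ is $\partial\bar\partial$-exact and hence cohomologically trivial.

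I expect the main obstacle to be Stage 2 — specifically, getting the metric determinant in the explicit coordinates under control. The family of solutions $\chi^{\mu\oplus\nu}$ is only characterised implicitly through the nonlinear Beltrami-type equation \eqref{eqM}, so extracting the second-order Taylor coefficients of $g_{\mathcal M'}$ requires careful use of the Green's operator for $\bar\partial_{\SetH}$ and a clean identification of which terms are pluriharmonic (and hence disposable) versus which contribute honest curvature. Pinning down all numerical constants — the $12$, the $n$ versus $n^2$, the factors of $2\pi$ — so that the local index contributions on the analytic side match the coordinate computation on the geometric side will be the delicate bookkeeping that makes or breaks the proof.
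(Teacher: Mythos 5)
Your outline has the right overall shape --- it is the paper's strategy: compute the Hessian of $F$ by Takhtajan--Zograf-type variational formulas for the determinants, compute $\text{Ric}^{1,1}=-i\partial\bar\partial\log\det G$ from the explicit second derivatives of the metric in the K\"ahler coordinates, and match term by term. But Stage 1 as you state it contains a conceptual error that would sink the pointwise identity. You assert that $\partial\bar\partial\log\det\Delta_0$ is ``a multiple of the Weil--Petersson form'' and that $\partial\bar\partial\log\det\Delta_{\text{Ad}E}$ ``decomposes into a piece proportional to $\omega_{M'}$ and a piece proportional to $\omega_{\mathcal T}$.'' If that were true, $2i\partial\bar\partial F$ would be a combination of closed $(1,1)$-forms, and the theorem would force $\text{Ric}^{1,1}$ to be such a combination pointwise, which it is not. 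What the local index theorem controls is the curvature of the \emph{Quillen} metric, i.e.\ of $\det\Delta/\det(\text{Gram matrix of harmonic forms})$; the $\partial\bar\partial\log\det(\text{Gram})$ term you are discarding is exactly where the non-closed, Ricci-type trace terms live. The paper's second-variation formulas make this explicit: e.g.\ $\bar\partial_{\bar\mu_2}\partial_{\mu_1}\log\det\Delta_{\text{Ad}E}=-\tr(\mu_1\bar P_{\text{End}E}\bar\mu_2 P_{\text{End}E})-\frac{(n^2-1)i}{6\pi}\omega_{\mathcal T}(\mu_1,\mu_2)$, and similarly $\bar\partial\partial\log\det\Delta_0$ carries the term $-\tr((\mu_1\bar\mu_2-\mu_1\partial\Delta_0^{-1}\partial^*\bar\mu_2)P_{TX})$ in addition to $\frac{i}{6\pi}\omega_{\mathcal T}$. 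Your Stage 3 gestures at recovering this via $\log\det g_{\mathcal M'}=4F+\cdots$, but that relation (besides having the wrong sign and factor; one needs roughly $\log\det g = -2F + \text{local potentials of the closed forms} + \text{pluriharmonic}$) is precisely what has to be \emph{proved}, and proving it requires the non-cohomological part of the variational formulas you suppressed in Stage 1.

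The second substantive omission is the mixed second derivative $\bar\partial_{\bar\nu_2^T}\partial_{\mu_1}\log\det\Delta_{\text{Ad}E}$, coupling a Teichm\"uller direction to a bundle direction. Since $\Delta_{\text{Ad}E}$ depends on the full pair $(\rho_{\mathbb H},\rho_E)$, its Hessian does not block-diagonalize with respect to the splitting $\mathcal{T}\times M'$, and this cross term must be shown to reproduce the corresponding mixed terms of $\text{Ric}^{1,1}$. This computation is not in the cited Takhtajan--Zograf papers (which work either over a fixed surface or over Teichm\"uller space alone) and is the main new analytic content of the paper's Section 4; your proposal does not address it. Finally, a bookkeeping point: $\operatorname{rk}\text{Ad}E=n^2-1$, not $n^2$; the coefficient $\frac{n^2}{12\pi}$ arises as $\frac12\cdot\frac{1}{6\pi}\bigl((n^2-1)+1\bigr)$, the $+1$ being contributed by the $\det\Delta_0$ factor, so the two determinants in $F$ cannot be treated as living on separate factors of the product even at the level of the $\omega_{\mathcal T}$-coefficient.
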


In our paper \cite{AP3} we will use of this formula to compute the curvature of the Hitchin connection.

\section{K{\"a}hler Coordinates for the Moduli Space of Pairs}
In order to construct coordinates we will map a small neighbourhood of zero of the tangent space onto the moduli space $\M'$ of pairs of a Riemann structure on $\Sigma$ and a rank $n$ degree $k$ stable vector bundle over this Riemann surface. We will in fact consider marked Riemann surfaces and thus this moduli space is diffeomorphic to the product of the Teichm\"uller space of $\Sigma$, $\mathcal{T}$, with the moduli space of irreducible flat connections on $\Sigma-p$ with holonomy around $p$ given by $e^{\frac{-2\pi i k}{n}}I$, $M'$. By the Narasimhan-Seshadri theorem, the space $M'$ can be given a complex structure $J_{[X]}$ by identifying it with the space of stable holomorphic bundles of rank $n$ and degree $k$, $M_{n,k}(X)$ for any $[X]\in \T$. This gives an almost complex structure which is in fact integrable \cite{JEANLGML}. As  we established in \cite{AP}, we have that these two complex structure are the same.
\begin{prop}
\label{prop:Csstrucmodspace}
The almost complex structure $J$ is in fact the complex analytic structure this space gets from the Narasimhan-Seshadri diffeomorphism
$$\Psi: (\T\times M', J) \ra \M'$$
e.g. this map is complex analytic.
\end{prop}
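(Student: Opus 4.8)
The plan is to show that $\Psi$ is pseudo-holomorphic, i.e. that $d\Psi\circ J=J_{\M'}\circ d\Psi$ at every point; since the target $\M'$ already carries an integrable complex analytic structure and $\Psi$ is a diffeomorphism, it then follows that $J=\Psi^{*}J_{\M'}$ is integrable and that $\Psi$ is a biholomorphism, which is the assertion. What makes this manageable is that, by the very way it is built, $J$ respects the product splitting $T(\T\times M')=T\T\oplus TM'$, acting as $J_\T$ on the first summand and as $J_{[X]}$ on the second. It therefore suffices to check that $\Psi$ is complex linear on each summand separately: writing a tangent vector as $v=v_h+v_v$ with $v_h\in T\T$ and $v_v\in TM'$, one obtains $d\Psi(Jv)=d\Psi(Jv_h)+d\Psi(Jv_v)=J_{\M'}\,d\Psi(v_h)+J_{\M'}\,d\Psi(v_v)=J_{\M'}\,d\Psi(v)$. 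Concretely, I would verify that $\Psi$ is pseudo-holomorphic along the horizontal leaves $\T\times\{\rho\}$ and along the vertical leaves $\{[X]\}\times M'$, after which the general case is automatic; this also re-derives the integrability of $J$ itself.

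Along a vertical leaf the statement is essentially a tautology. The restriction of $\Psi$ to $\{[X]\}\times M'$ is the Narasimhan--Seshadri map onto the fibre of $\M'$ over $[X]$, namely $M_{n,k}(X)$, and $J_{[X]}$ is by definition the complex structure transported from $M_{n,k}(X)$ along precisely this map; since the complex structure on that fibre is the one induced from $\M'$ — this being the content, already established in \cite{AP}, that the $\nu$-coordinates with $\mu=0$ are holomorphic charts for $M_{n,k}(X)$ — the restriction is holomorphic.

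The substance of the proof is along the horizontal leaves. Under the Narasimhan--Seshadri identification, $\Psi$ carries $\T\times\{\rho\}$ to the family $[X]\mapsto(X,E^{\rho}_X)$, where $E^{\rho}_X$ is the holomorphic bundle determined by the fixed flat unitary connection $\rho$ and the complex structure $X$. I would parametrise a neighbourhood of $[X]$ in $\T$ by Beltrami differentials $\mu$ as in the Ahlfors--Bers construction; then $[X_\mu]$ depends holomorphically on $\mu$, while the Dolbeault operator induced on the fixed underlying $C^{\infty}$ bundle of $E$ has the form $\bar\partial^{\rho}_{\mu}=\bar\partial^{\rho}-\mu\!\cdot\!\partial^{\rho}$, where $\partial^{\rho}+\bar\partial^{\rho}$ is the splitting, with respect to the reference complex structure, of the flat connection determined by $\rho$. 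Since this expression is affine — in particular holomorphic — in $\mu$, the pair $\bigl(X_\mu,(E,\bar\partial^{\rho}_{\mu})\bigr)$ is a holomorphic family of stable pairs over the $\mu$-disc, so the induced classifying map into $\M'$ is holomorphic; that is, $\Psi$ is pseudo-holomorphic along $\T\times\{\rho\}$.

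Putting the two cases together shows that $\Psi$ is pseudo-holomorphic, and the proposition follows as explained in the first paragraph. I expect the main obstacle to be the horizontal step: it rests on the Ahlfors--Bers theory for the holomorphic dependence of $[X_\mu]$ on the Beltrami parameter together with the explicit affine dependence of the twisted Dolbeault operator on $\mu$, and one must also invoke with care the compatibility — taken from \cite{AP} — between $J_{[X]}$ and the fibrewise complex structure of $\M'$. Everything else in the argument is formal.
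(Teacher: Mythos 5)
Your proposal is correct and follows essentially the same route as the paper (whose details are deferred to \cite{AP}): the proof reduces to the fibrewise statement, which is tautological from the definition of $J_{[X]}$ via Narasimhan--Seshadri, plus holomorphicity along the horizontal leaves, which both you and the paper obtain by exhibiting the family $[X]\mapsto (X,E_{\rho})$ with fixed unitary representation as a holomorphic family over $\T$. The only difference is in how that family is realized --- the paper takes the fibrewise quotient of $\T\times\SetH\times\numbC^n$ by $\rho_{\SetH}\times\rho_E$ with $\rho_{\SetH}$ varying holomorphically by Bers' theorem, whereas you use the $\mu$-dependence of the twisted Dolbeault operator; in your version one should note that the target $\Omega^{0,1}_{X_\mu}$ varies with $\mu$ as well, so the ``affine in $\mu$'' statement is really a statement about the total space of the family, which is exactly what the quotient construction packages.
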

For the details of the proof see \cite{AP}. The idea of the proof is to construct a holomorphic family of Riemann surfaces, all with a bundle given by the same $\U(n)$-representation $\rho_E$. To do so, consider Teichm\"{u}ller space cartesian product with ${\mathbb H}\times \numbC^n$ and take the fiberwise sheaf theoretic quotient with respect to the action of $\rho_{\mathbb H}\times\rho_E$ for $\rho_{\mathbb H}$  running through a component of  $\hom(\pi_1,PSL(2,{\mathbb R}))$ which corresponds to Teichm\"{u}ller space. After division by the conjugation action of $PSL(2,{\mathbb R})$, this family let us split the tangent space at each point in the moduli space of pairs and identify the holomorphic tangent space of the moduli space of pairs with $H^1(X,TX)\oplus H^1(X,\text{End}E)$ at $(X,E)\in\mathcal{M}$. In what follows we will always identify these spaces with harmonic $(0,1)$-forms.

We recall that we have chosen the representation $\rho_{\mathbb H}$ giving the Riemann surface $X$ and  we can think of the representation $\rho_E$ as an irreducible $U(n)$-representation of $\pi_1^{orb}(X_n)$, where the elliptic element is mapped to $e^{-\frac{2 \pi k}{n}}I$ and where $X_n$ is the orbifold cover of $X$ completely ramified over a single point with ramification index $n$. We consider $\SetH$ as the cover of $X_n$ and pick  $z_0$ as a point in $\SetH$ covering our ramified point in $X_n$. We will now proceed to construct the maps $\chi^{\mu\oplus\nu} : \SetH\times \mathbf{GL}(n,\numbC)\to\SetH\times \mathbf{GL}(n,\numbC)$ for each $\mu\oplus\nu$ in a small neigborhood of $0$ in $ H^1(X,TX)\oplus H^1(X,\text{End}E)$. We then define a new point in $\M$ by (\ref{eqeq}).

Compared to \cite{AP} we have introduced the additional term $\frac{-1}{2}\tilde g^{-1}_X\tr\nu\otimes\nu\partial_\SetH,$ in the equation for $\chi^{\mu\oplus\nu}$, where $\tilde g_X$ will be used both as the metric tensor and in integrals as the metric density.

We construct the maps $\chi^{\mu\oplus\nu}$ as follows.
\begin{proof}[Theorem \ref{thm:uni coord}]
    These equations split in two equations, namely one for $\chi_1^{\mu\oplus\nu}:\SetH\to\SetH$ and one for $\chi^{\mu\oplus\nu}_2:\SetH\times \mathbf{GL}(n,\numbC)\to\mathbf{GL}(n,\numbC)$ as follows
\begin{align}
  \bar\partial_\SetH \chi_1^{\mu\oplus\nu}&=(\mu-\frac{1}{2}\tilde g^{-1}_X\tr\nu\otimes\nu)\cdot\partial_\SetH\chi_1^{\mu\oplus\nu} \label{eq:def1}
\\  \bar\partial_\SetH \chi_2^{\mu\oplus\nu}&=(\mu-\frac{1}{2}\tilde g^{-1}_X\tr\nu\otimes\nu)\cdot\partial_\SetH\chi_2^{\mu\oplus\nu}+\partial_{\mathbf{GL}(n,\numbC)}\chi_2^{\mu\oplus\nu}\cdot\nu. \label{eq:def2}
\end{align}
Since $\mu-\frac{1}{2}\tilde g^{-1}_X\tr\nu\otimes\nu$ is an analytic family of Beltrami differential, Bers has shown we can solve the first equation and that the suitably normalised solution depends analytic on the family of Beltrami differentials. The second equation is solved by first finding the antiholomorphic solution to \[  \bar\partial_\SetH \chi_-^{\mu\oplus\nu}=\chi_-^{\mu\oplus\nu}\cdot\nu.\] This can be done, since $\SetH$ is simply connected and $\nu$ is a harmonic $(0,1)$-form and so the equation (\ref{eq:def2}) can be thought of as the equation for a trivialization for a flat connection, where $\chi_-^{\mu\oplus\nu}$ is the gauge transformation relating this connection to the trivial one on  $\SetH$. Then we define the representation $$r^{\mu\oplus\nu}(\gamma)=\chi_-^{\mu\oplus\nu}(\gamma z)\rho_E(\gamma)(\chi_-^{\mu\oplus\nu}(z))^{-1} .$$ Since $E$ is stable, we conclude for $\mu\oplus\nu$ small that $r^{\mu\oplus\nu}$ define a stable bundle on $X_{\mu\oplus\nu}(=\SetH/\rho_\SetH^{\mu\oplus\nu})$. This means we can find a holomorphic gauge transformation, $\chi_+^{\mu\oplus\nu}$ on $X_{\mu\oplus\nu}$ so as to make the representation $r^{\mu\oplus\nu}$ admissible. Now using the defining differential equations we find that $\chi_+^{\mu\oplus\nu}\circ\chi_1^{\mu\oplus\nu}\in\ker(\bar\partial_\SetH-(\mu-\frac{1}{2}\tilde g^{-1}_X\tr \nu^2)\partial)$. Also since it is independent of $\mathbf{GL}(n,\numbC)$ we see that $(\chi_+^{\mu\oplus\nu}\circ\chi_1^{\mu\oplus\nu})\chi_-^{\mu\oplus\nu}$ solve \eqref{eq:def2}. And we have that
\begin{align*}
  \chi_+^{\mu\oplus\nu}\circ&\chi_1^{\mu\oplus\nu}\chi_-^{\mu\oplus\nu}(\rho_\SetH(\gamma) z,e)\rho_E(\gamma)(\chi_+^{\mu\oplus\nu}\circ\chi_1^{\mu\oplus\nu}\chi_-^{\mu\oplus\nu}(z,e))^{-1}\\&=\chi_+^{\mu\oplus\nu}(\chi_1^{\mu\oplus\nu}(\rho_\SetH (\gamma)( \chi_1^{\mu\oplus\nu})^{-1}z),e)r^{\mu\oplus\nu}(\gamma)\chi_+^{\mu\oplus\nu}(z,e))^{-1}=\rho^{\mu\oplus\nu}(\gamma),
\end{align*}
since the conjugation by the gauge transformation does not depend on which base point we chose, and so we choose $ (\chi_1^{\mu\oplus\nu})^{-1}(z)$ in the second to last equality instead of $z$.
 
So we get an admissible representation, finally we can normalise the choises by requireing that $\chi_1$ fix $0,1,\infty$ and $\chi_2(z_0)$ is a positive definite matrix of determinant $1$ at $z_0$.
\end{proof}
By the implicit function theorem it follow that
\begin{thm}
  The assignment $\mu\oplus\nu \mapsto (\rho^{\mu\oplus\nu}_\SetH,\rho_E^{\mu\oplus\nu})$ gives complex analytic coordinates for $\mathcal{M}'$ in a small neighboorhood of $([X],[E])\in {\mathcal M}'$.
\end{thm}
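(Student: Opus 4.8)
The plan is to recognise the assignment
$$\Phi\colon\ \mu\oplus\nu\ \longmapsto\ (\rho^{\mu\oplus\nu}_\SetH,\rho_E^{\mu\oplus\nu})$$
as a holomorphic map from a neighbourhood $U$ of $0$ in $V:=H^{1}(X,TX)\oplus H^{1}(X,\text{End}E)$ into $\M'$ whose differential at $0$ is an isomorphism, and then to invoke the holomorphic inverse function theorem. By Proposition \ref{prop:Csstrucmodspace} the target is a complex manifold whose holomorphic tangent space at $([X],[E])$ is canonically identified with $V$ (via harmonic $(0,1)$-forms), so these two properties of $\Phi$ suffice.

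First I would verify holomorphic dependence on the parameters. Put $\beta(\mu,\nu):=\mu-\frac{1}{2}\tilde g^{-1}_X\tr\nu\otimes\nu$. Since $\tr\nu\otimes\nu$ is \emph{complex bilinear} in $\nu$ --- this is precisely why the new term uses $\nu\otimes\nu$ and not $\nu\otimes\bar\nu$ --- the map $\beta$ is holomorphic from $V$ into the space of bounded Beltrami differentials and is therefore an analytic family of Beltrami differentials in the sense of Bers; hence the normalised solution $\chi_1^{\mu\oplus\nu}$ of (\ref{eq:def1}) depends holomorphically on $\mu\oplus\nu$. The antiholomorphic solution $\chi_-^{\mu\oplus\nu}$ of $\bar\partial_\SetH\chi_-=\chi_-\cdot\nu$ depends holomorphically on $\nu$, and, stability being an open condition, the holomorphic gauge transformation $\chi_+^{\mu\oplus\nu}$ making $r^{\mu\oplus\nu}$ admissible exists for all small $\mu\oplus\nu$ and depends holomorphically on the parameters by elliptic regularity for the relevant $\bar\partial$-operators over the holomorphically varying family of stable bundles. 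As (\ref{eqeq}) is polynomial in $\chi^{\mu\oplus\nu}$ and $(\chi^{\mu\oplus\nu})^{-1}$, the map $\Phi$ is holomorphic; alternatively, once the target complex structure is known, it is enough to check that $d\Phi_0$ is complex linear, which is visible from the next step.

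Next I would compute $d\Phi_0$. Writing $\chi^{t(\mu\oplus\nu)}=\Id+t\dot\chi+O(t^2)$ and differentiating (\ref{eqM}) at $t=0$, the quadratic term $-\frac{1}{2}\tilde g^{-1}_X\tr\nu\otimes\nu$ drops out, so $\dot\chi$ solves exactly the linearised system of \cite{AP}. Differentiating the conjugation formula (\ref{eqeq}) at $t=0$ then produces, for each $\gamma$, a tangent vector to $\M'$ at $([X],[E])$ whose associated cocycle is the Kodaira--Spencer class of the deformation; under the identification of $T_{([X],[E])}\M'$ with harmonic representatives fixed before Proposition \ref{prop:Csstrucmodspace}, this class is $\mu\oplus\nu$. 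Hence $d\Phi_0=\Id_V$, in particular an isomorphism --- this is exactly the computation of \cite{AP}, the only difference being the harmless vanishing of the quadratic term at first order. The holomorphic inverse function theorem then gives a smaller neighbourhood $U'\subseteq U$ of $0$ on which $\Phi$ restricts to a biholomorphism onto an open neighbourhood of $([X],[E])$ in $\M'$, which is the asserted coordinate chart.

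The step I expect to be the main obstacle is the identification $d\Phi_0=\Id_V$: one must check that the first-order variation of the representation built from the three factors $\chi_2^{\mu\oplus\nu}=\chi_+^{\mu\oplus\nu}\circ\chi_1^{\mu\oplus\nu}\cdot\chi_-^{\mu\oplus\nu}$, together with the normalisations at $0,1,\infty$ and at $z_0$, represents the harmonic class $\mu\oplus\nu$ itself and not some other representative or one shifted by a coboundary coming from the normalisation. Here one reuses the corresponding argument of \cite{AP} essentially verbatim, the point being that the newly added quadratic term makes no contribution to the derivative at $0$ (and, as noted, is exactly what makes the dependence on $(\mu,\nu)$ holomorphic).
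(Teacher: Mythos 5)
Your proposal is correct and follows essentially the same route as the paper, which simply invokes the implicit/inverse function theorem together with the Kodaira--Spencer computation (showing $d\Phi_0=\Id$ on harmonic representatives) and Bers' analytic dependence of $\chi_1^{\mu\oplus\nu}$ on the analytic family of Beltrami differentials $\mu-\frac{1}{2}\tilde g_X^{-1}\tr\nu\otimes\nu$. You have merely written out in detail the ingredients the paper leaves implicit, including the correct observation that the added quadratic term is complex-bilinear in $\nu$ (hence preserves holomorphy) and vanishes to second order (hence does not affect the differential at the base point).
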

Alternatively this can be seen from the calculation of the Kodaira-Spencer maps in the next section, which shows the coordinates are in fact holomorphic coordinates.

\subsection{The Kodaira-Spencer Map}

Let us now consider the Kodaira-Spencer map at any point of our coordinates.

\begin{lemma}
  The Kodaira-Spencer map in our coordinates is given by
  \begin{multline*}
    KS_{\mu\oplus\nu}(\mu_1\oplus\nu_1)=P_{TX}\left((\chi_1^{\mu\oplus\nu})^{-1}_*\frac{\mu_1-\tilde g^{-1}_{X_{\mu\oplus\nu}}\tr\nu_1\nu}{1-\vert\mu\vert^2}\right)\oplus \\P_{\textnormal{End}E}\left((\chi_1^{\mu\oplus\nu})^{-1}_*\text{Ad}\chi_2^{\mu\oplus\nu}\left(\nu_1+(\mu_1-\tilde g^{-1}_{X_{\mu\oplus\nu}}\tr\nu_1\nu)\chi_2^{\mu\oplus\nu}\partial_\SetH\chi_2^{\mu\oplus\nu}\right)\right)
  \end{multline*}
when we identify $H^1(X,TX)\oplus H^1(X,\textnormal{End}E)$, with the harmonic $(0,1)$-forms. Here $P_{TX}$ is the projection on the harmonic Beltrami differentials on $X$ and $P_{\textnormal{End}E}$ is the projection on harmonic $(0,1)$-forms with values in $\textnormal{End}E$.
\end{lemma}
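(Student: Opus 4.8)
The plan is to differentiate the family-of-solutions equation \eqref{eqM} in the direction $\mu_1\oplus\nu_1$ and then push the result forward by $(\chi^{\mu\oplus\nu})^{-1}$ so as to read off the Kodaira–Spencer class on the reference surface $X$. Concretely, I would first fix the base point $\mu\oplus\nu$ and write $\chi=\chi^{\mu\oplus\nu}$, $\chi_1=\chi_1^{\mu\oplus\nu}$, $\chi_2=\chi_2^{\mu\oplus\nu}$ and introduce the variation $\dot\chi=\frac{d}{dt}\big|_{t=0}\chi^{(\mu+t\mu_1)\oplus(\nu+t\nu_1)}$. Differentiating \eqref{eq:def1} and \eqref{eq:def2} at $t=0$ gives linear $\bar\partial$-equations for $\dot\chi_1$ and $\dot\chi_2$ whose inhomogeneous terms involve $\mu_1$, $\nu_1$ and the term $-\tfrac12\tilde g_X^{-1}\tr(\nu_1\otimes\nu+\nu\otimes\nu_1)=-\tilde g_X^{-1}\tr\nu_1\nu$ coming from the new quadratic correction; this is exactly the origin of the $\tilde g^{-1}\tr\nu_1\nu$ terms in the stated formula. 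The Kodaira–Spencer class is by definition the Dolbeault class of the deformation of the complex structure induced by $\dot\chi$, so the next step is to transport the new structure back to $X$: the base part becomes $(\chi_1)^{-1}_*$ applied to $\bar\partial(\dot\chi_1\circ\chi_1^{-1})$, and the bundle part is obtained by conjugating the variation of the gauge by $\mathrm{Ad}\,\chi_2$ and again pulling back by $(\chi_1)^{-1}_*$. Carrying the chain rule through (using $\partial_\SetH\chi_1$ has modulus-squared Jacobian $1-|\mu|^2$ by the normalisation, so that the base direction acquires the denominator $1-|\mu|^2$, and that the metric $\tilde g_X$ on $X$ corresponds to $\tilde g_{X_{\mu\oplus\nu}}$ after transport) yields precisely the two displayed representatives inside $P_{TX}(\cdots)$ and $P_{\mathrm{End}E}(\cdots)$.

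The final step is to pass from these $(0,1)$-form representatives to honest cohomology classes, i.e. to project onto harmonic representatives; this is exactly what the operators $P_{TX}$ and $P_{\mathrm{End}E}$ do, and it is legitimate because adding a $\bar\partial$-exact term changes neither the class nor, after harmonic projection, the answer. I would also note that one should check the ``mixed'' term $(\mu_1-\tilde g^{-1}\tr\nu_1\nu)\chi_2\partial_\SetH\chi_2$ in the bundle component is well-defined as a section over $X_{\mu\oplus\nu}$, which follows from the cocycle/automorphy properties of $\chi_1,\chi_2$ established in the proof of Theorem~\ref{thm:uni coord}.

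The main obstacle is bookkeeping rather than conceptual: one must be careful about (i) which surface ($X$ versus $X_{\mu\oplus\nu}$) each object lives on and how $(\chi_1)^{-1}_*$ and $\mathrm{Ad}\,\chi_2$ intertwine the relevant bundles, and (ii) correctly differentiating the normalisation conditions (2) and (4), which is what pins down the representative uniquely and produces the $1-|\mu|^2$ normalisation. A secondary subtlety is that, because of the new $-\tfrac12\tilde g_X^{-1}\tr\nu\otimes\nu$ term, the Beltrami coefficient governing the base deformation is $\mu-\tfrac12\tilde g_X^{-1}\tr\nu\otimes\nu$ rather than $\mu$; one must confirm that, at the level of harmonic projection on $X$, this does not alter the base component beyond the explicit $-\tilde g^{-1}\tr\nu_1\nu$ correction already displayed — i.e. that the extra quadratic-in-$\nu$ piece of the Beltrami coefficient contributes only through its linearisation. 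Once these are tracked carefully, the computation reduces to the corresponding one in \cite{AP} with the indicated modifications, and the stated formula follows.
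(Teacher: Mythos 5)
Your proposal is correct and follows essentially the same route as the paper: the authors likewise differentiate the defining equations (\ref{eq:def1})--(\ref{eq:def2}) at the base point $\mu\oplus\nu$, use the chain rule together with the Beltrami equation for $\chi_1^{\mu\oplus\nu}$ (and its inverse) to express $\bar\partial_\SetH$ of the variation $\dot\chi\circ(\chi_1^{\mu\oplus\nu})^{-1}$, observe that the quadratic term $-\tfrac12\tilde g_X^{-1}\tr\nu^2$ contributes only through its linearisation $-\tilde g_X^{-1}\tr\nu_1\nu$, and finish with the harmonic projections $P_{TX}$ and $P_{\textnormal{End}E}$. The bookkeeping points you flag (the Jacobian factor producing $1-\vert\mu\vert^2$, the transport by $(\chi_1^{\mu\oplus\nu})^{-1}_*$ and $\mathrm{Ad}\,\chi_2^{\mu\oplus\nu}$) are exactly the steps carried out in the paper's computation.
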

\begin{proof}
  The Kodaira-Spencer class as a harmonic Beltrami differential 
 for the first factor is given by the harmonic projection of
  \begin{align*}
    \bar\partial_\SetH &\left.\frac{d}{d\epsilon}\right\vert_{\epsilon=0}\chi_1^{\epsilon(\mu_1\oplus\nu_1)+\mu\oplus\nu}\circ(\chi_1^{\mu\oplus\nu})^{-1}\\ &=\left.\frac{d}{d\epsilon}\right\vert_{\epsilon=0}((\partial\chi_1^{\epsilon(\mu_1\oplus\nu_1)+\mu\oplus\nu})\circ(\chi_1^{\mu\oplus\nu})^{-1}\bar\partial (\chi_1^{\mu\oplus\nu})^{-1}\\ &+(\bar\partial\chi_1^{\epsilon(\mu_1\oplus\nu_1)+\mu\oplus\nu})\circ(\chi_1^{\mu\oplus\nu})^{-1}\overline{\partial (\chi_1^{\mu\oplus\nu})^{-1}}.
  \end{align*}
Now we rewrite this using the differential equation $$\bar\partial \chi_1^{\mu\oplus\nu}=(\mu-\frac{1}{2}g^{-1}_X\tr(\nu)^2)\partial\chi_1^{\mu\oplus\nu}.$$ This equation also imply that $$\bar\partial(\chi_1^{\mu\oplus\nu})^{-1}=-(\mu-\frac{1}{2}\tilde g^{-1}_X\tr \nu^2))\circ(\chi_1^{\mu\oplus\nu})^{-1}\partial (\chi_1^{\mu\oplus\nu})^{-1},$$
which let us conclude that
  \begin{align*}
    \bar\partial_\SetH &\left.\frac{d}{d\epsilon}\right\vert_{\epsilon=0}\chi_1^{\epsilon(\mu_1\oplus\nu_1)+\mu\oplus\nu}\circ(\chi_1^{\mu\oplus\nu})^{-1}\\ &=\left.\frac{d}{d\epsilon}\right\vert_{\epsilon=0}(-(\partial\chi_1^{\epsilon(\mu_1\oplus\nu_1)+\mu\oplus\nu}(\mu-\frac{1}{2}\tilde g^{-1}_X\tr \nu^2))\circ(\chi_1^{\mu\oplus\nu})^{-1}\partial (\chi_1^{\mu\oplus\nu})^{-1}\\ &+(\mu+\epsilon\mu_1-\frac{1}{2}g^{-1}_X\tr(\nu+\epsilon\nu_1)^2)\partial\chi_1^{\epsilon(\mu_1\oplus\nu_1)+\mu\oplus\nu})\circ(\chi_1^{\mu\oplus\nu})^{-1}\overline{\partial (\chi_1^{\mu\oplus\nu})^{-1}}.
  \end{align*}
Now all terms contain $\epsilon$ and so it is clear that the derivativ is
  \begin{align*}
    \bar\partial_\SetH &\left.\frac{d}{d\epsilon}\right\vert_{\epsilon=0}\chi_1^{\epsilon(\mu_1\oplus\nu_1)+\mu\oplus\nu}\circ(\chi_1^{\mu\oplus\nu})^{-1}\\ &=\left(\frac{\mu_1-g^{-1}_X\tr(\nu\nu_1)}{1-\vert\mu\vert^2}\frac{\partial\chi_1^{\mu\oplus\nu}}{\overline{\partial \chi_1^{\mu\oplus\nu}}}\right)\circ(\chi_1^{\mu\oplus\nu})^{-1}.
  \end{align*}

For the second part, we have that the Kodiara-Spencer class is the harmonic representative of (for details see Lemma 3.1. of \cite{AP})
\begin{align*}
  \bar\partial (\frac{d}{d \epsilon}\vert_{\epsilon=0}&\chi_2^{\epsilon(\mu_1\oplus\nu_1)+\mu\oplus\nu}(\chi_2^{\mu\oplus\nu})^{-1})\circ(\chi_1^{\mu\oplus\nu})^{-1}\\ &=\frac{d}{d \epsilon}\vert_{\epsilon=0}\text{Ad}((\chi_2^{\mu\oplus\nu})\circ(\chi_1^{\mu\oplus\nu})^{-1}((\chi_2^{\epsilon(\mu_1\oplus\nu_1)+\mu\oplus\nu})^{-1}\circ(\chi_1^{\mu\oplus\nu})^{-1}\\ &\qquad \bar\partial( \chi_2^{\epsilon(\mu_1\oplus\nu_1)+\mu\oplus\nu}\circ(\chi_1^{\mu\oplus\nu})^{-1}))).
\end{align*}
We use the differential equations for $\chi_2^{\mu\oplus \nu}$ and $\chi_1^{\mu\oplus\nu}$ to see that
\begin{align*}
  \bar\partial (\frac{d}{d \epsilon}\vert_{\epsilon=0}&\chi_2^{\epsilon(\mu_1\oplus\nu_1)+\mu\oplus\nu}(\chi_2^{\mu\oplus\nu})^{-1})\circ(\chi_1^{\mu\oplus\nu})^{-1}\\ &=\frac{d}{d \epsilon}\vert_{\epsilon=0}(\text{Ad}((\chi_2^{\mu\oplus\nu})((\chi_2^{\epsilon(\mu_1\oplus\nu_1)+\mu\oplus\nu})^{-1}\\ &\qquad (\epsilon\mu_1-\epsilon\tilde g^{-1}_\sigma\tr\nu_1\nu-\frac{\epsilon^2}{2}\tr\nu_1^2)\partial \chi_2^{\epsilon(\mu_1\oplus\nu_1)+\mu\oplus\nu}\\ &\qquad +\chi_2^{\epsilon(\mu_1\oplus\nu_1)+\mu\oplus\nu}(\nu+\epsilon\nu_1))\circ(\chi_1^{\mu\oplus\nu})^{-1}(\overline{\partial(\chi_1^{\mu\oplus\nu})^{-1}}).
\end{align*}
Now again all terms contain a factor of $\epsilon$ and so the derivativ is
\begin{align*}
  \bar\partial (\frac{d}{d \epsilon}\vert_{\epsilon=0}&\chi_2^{\epsilon(\mu_1\oplus\nu_1)+\mu\oplus\nu}(\chi_2^{\mu\oplus\nu})^{-1})\circ(\chi_1^{\mu\oplus\nu})^{-1}\\ &=(\chi_1^{\mu\oplus\nu})^{-1}_*\text{Ad}\chi_2^{\mu\oplus\nu}(\nu_1+(\mu_1-\tilde g^{-1}_{X_{\mu\oplus\nu}}\tr\nu_1\nu)\chi_2^{\mu\oplus\nu}\partial_\SetH\chi_2^{\mu\oplus\nu}).
\end{align*}
\end{proof}
\subsection{Properties of $\chi^{\mu\oplus\nu}$ and derivatives of families of operators.}
\label{sec:thetaprop}
To understand the metric and to find a Ricci potential, we differentiate our coordinate functions. We have the following formulae
\begin{lemma}\label{lemchiprop}
  The following equations hold
  \begin{align*}
    \frac{d}{d\epsilon}\vert_{\epsilon=0}\left((\overline{\chi_2^{\epsilon(\mu\oplus\nu)}})^T\chi_2^{\epsilon(\mu\oplus\nu)}\right)&=0, \\
    \frac{d}{d\epsilon}\vert_{\epsilon=0}\partial\chi_2^{\epsilon(\mu\oplus\nu)}&=-\bar\nu^T.
  \end{align*}
\end{lemma}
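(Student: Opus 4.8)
I would first reduce the whole statement to an assertion about the single map $\phi_\epsilon:=\chi_2^{\epsilon(\mu\oplus\nu)}(\cdot,e)\colon\SetH\to\GL(n,\numbC)$, from which $\chi_2^{\epsilon(\mu\oplus\nu)}$ is recovered by equivariance; note $\phi_0\equiv\Id$, and write $\phi^*:=\overline{\phi}^T$, $\dot\phi:=\frac{d}{d\epsilon}\vert_{\epsilon=0}\phi_\epsilon$. Differentiating \eqref{eq:def2} at $\epsilon=0$: since $\chi_2^0$ is independent of the $\SetH$-factor, $\partial_\SetH\chi_2^0=0$, and the Beltrami coefficient $\mu-\tfrac12\tilde g_X^{-1}\tr\nu\otimes\nu$ is $O(\epsilon)$, so the entire first term on the right of \eqref{eq:def2} drops out to first order; the second term contributes $\partial_{\GL(n,\numbC)}\chi_2^0\cdot\nu$, which equals $\nu$ at the base point since $\chi_2^0=\Id$. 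Hence $\bar\partial\dot\phi=\nu$. In particular $\dot\phi$, and so both sides of the Lemma, depend only on $\nu$, so I may take $\mu=0$ and work on the fixed surface (equivalently on its ramified cover $X_n$, where $\text{End}E$ carries a genuine flat unitary connection).

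The heart of the proof is the first identity, $\dot H_0=0$, where $H_\epsilon:=(\overline{\chi_2^{\epsilon(\mu\oplus\nu)}})^T\chi_2^{\epsilon(\mu\oplus\nu)}$; at the base point $H_\epsilon=\phi_\epsilon^*\phi_\epsilon$, so $H_0=\Id$ and $\dot H_0=\dot\phi^*+\dot\phi$. By Theorem~\ref{thm:uni coord}(3) the representation $\rho_E^{\epsilon(\mu\oplus\nu)}$ is unitary; combining this with unitarity of $\rho_E$ and the transformation law \eqref{eqeq} gives $H_\epsilon(\gamma z)=\rho_E(\gamma)H_\epsilon(z)\rho_E(\gamma)^{-1}$, so $H_\epsilon$ descends to a Hermitian metric on the $C^\infty$ bundle underlying $E$. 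In fact $H_\epsilon$ is exactly the Narasimhan--Seshadri metric of the deformed stable bundle $E_\epsilon$ written in the flat frame of $E$ — in that frame $E_\epsilon$ has holomorphic structure $\bar\partial+\epsilon\nu$ (read off from $\phi_\epsilon^{-1}\bar\partial\phi_\epsilon=\epsilon\nu$) — so the Chern connection of the pair $(\bar\partial+\epsilon\nu,\,H_\epsilon)$ has curvature independent of $\epsilon$ ($0$ for $\deg E=0$, and the usual central constant in general). Its $(1,0)$-part is $\theta_\epsilon=H_\epsilon^{-1}\partial H_\epsilon-\epsilon H_\epsilon^{-1}\bar\nu^T H_\epsilon$, hence $\dot\theta_0=\partial\dot H_0-\bar\nu^T$. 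Differentiating the curvature identity at $\epsilon=0$ — the quadratic terms vanish because $\theta_0=0$ — gives $\bar\partial\dot\theta_0+\partial\nu=0$; since $\nu$ is harmonic and $\text{End}E$ is flat unitary, $\partial\nu=0$ and $\bar\partial\bar\nu^T=0$, and the equation collapses to $\partial\bar\partial\dot H_0=0$, with $\dot H_0$ a self-adjoint section of $\text{End}E$.

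The rest is routine. From the Kähler identity $\bar\partial^\dagger=-i[\Lambda,\partial]$ and $\Lambda\equiv0$ on $(0,1)$-forms, $\bar\partial^\dagger\bar\partial\dot H_0=-i\Lambda\,\partial\bar\partial\dot H_0=0$, so $\|\bar\partial\dot H_0\|^2=\langle\dot H_0,\bar\partial^\dagger\bar\partial\dot H_0\rangle=0$ and $\bar\partial\dot H_0=0$. Thus $\dot H_0$ is a holomorphic endomorphism of the stable — hence simple — bundle $E$, so $\dot H_0=\lambda\,\Id$ with $\lambda\in\numbC$; self-adjointness forces $\lambda\in\numbR$, and the normalisation of Theorem~\ref{thm:uni coord}(4) (which gives $\det\phi_\epsilon(z_0)=1$, hence $\tr\dot H_0(z_0)=0$) forces $\lambda=0$. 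This is the first identity. The second follows: $\dot\phi^*+\dot\phi=0$ means $\dot\phi$ is skew-Hermitian-valued, so applying $\bar\partial$ to $\dot\phi^*=-\dot\phi$ and using $\bar\partial\dot\phi=\nu$ gives $\overline{\partial\dot\phi}^T=\bar\partial(\dot\phi^*)=-\nu$, i.e.\ $\partial\dot\phi=-\bar\nu^T$.

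I expect the main obstacle to be the identification in the second paragraph — verifying that $H_\epsilon$ genuinely is the Narasimhan--Seshadri metric of $E_\epsilon$ in the flat frame of $E$, so that $(\bar\partial+\epsilon\nu,H_\epsilon)$ has $\epsilon$-independent curvature — together with the bookkeeping showing that the $\mu$-direction and the base change $\chi_1^{\epsilon(\mu\oplus\nu)}$ do not affect $\dot\phi$, and setting up the Hodge theory cleanly in the orbifold/parabolic setting. Everything after that is short.
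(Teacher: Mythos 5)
Your argument is correct, but it reaches the key intermediate fact by a genuinely different route than the paper. The paper never mentions the Hermitian--Einstein/Narasimhan--Seshadri characterisation of $H_\epsilon=(\overline{\chi_2^{\epsilon(\mu\oplus\nu)}})^T\chi_2^{\epsilon(\mu\oplus\nu)}$; instead it uses the explicit factorisation $\chi_2^{\epsilon(\mu\oplus\nu)}=(\chi_+^{\epsilon(\mu\oplus\nu)}\circ\chi_1^{\epsilon(\mu\oplus\nu)})\chi_-^{\epsilon(\mu\oplus\nu)}$ from the proof of Theorem~\ref{thm:uni coord} and computes $\Delta_0\dot H_0$ term by term: the $\chi_-$-contributions are harmonic because $\bar\partial\dot\chi_-=\nu$ with $\nu$ harmonic, and the $\chi_+\circ\chi_1$-contributions are killed because that factor lies in the kernel of the deformed $\bar\partial$-operator and $\chi_+^0=I$. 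Both routes then finish identically: a harmonic (resp.\ holomorphic) section of $\mathrm{End}E$ for a simple/irreducible $E$ is a scalar, and the determinant normalisation kills the scalar; and the second identity is extracted from the first by applying $\bar\partial$ and using $\bar\partial\dot\chi_2=\nu$, exactly as you do. What your approach buys is conceptual transparency — the vanishing of $\dot H_0$ is exposed as the linearisation of the constant-curvature equation for the NS metric — at the cost of having to justify the identification of $H_\epsilon$ with that metric and the frame bookkeeping you flag yourself (this is genuinely where the content sits; it is exactly what the paper's direct computation with $\chi_\pm$ encodes). The paper's computation is more elementary and self-contained, but hides the geometry. Two small corrections to your write-up: the equivariance law is $H_\epsilon(\gamma z)=\rho_E^{\epsilon(\mu\oplus\nu)}(\gamma)H_\epsilon(z)\rho_E^{\epsilon(\mu\oplus\nu)}(\gamma)^{-1}$ (the deformed representation, as in the paper's displayed identity), not conjugation by $\rho_E(\gamma)$ — consistent with $H_\epsilon$ being a metric for the \emph{deformed} flat structure, which is what you actually need; and the identity $\phi_\epsilon^{-1}\bar\partial\phi_\epsilon=\epsilon\nu$ holds only modulo $O(\epsilon^2)$ because of the Beltrami term in \eqref{eq:def2}, which is harmless since you only differentiate once at $\epsilon=0$ but should be said.
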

\begin{proof}

  To show the first equality, we first observe that by definition we have that
  \begin{align*}
    \rho^{\mu\oplus\nu}(\gamma)=(\chi_2^{\mu\oplus\nu}\circ\rho_\SetH(\gamma))^{-1}\rho_E(\gamma) \chi_2^{\mu\oplus\nu}\Rightarrow \chi_2^{\mu\oplus\nu}\circ\rho_\SetH(\gamma)=\rho_E(\gamma) \chi_2^{\mu\oplus\nu} \rho^{\mu\oplus\nu}(\gamma)^{-1}
  \end{align*}
thus we get that
$$((\overline{\chi_2^{\epsilon(\mu\oplus\nu)}})^T\chi_2^{\epsilon(\mu\oplus\nu)})\circ \rho_\SetH(\gamma)=\text{Ad}\rho^{\mu\oplus\nu}(\gamma)(\overline{\chi_2^{\epsilon(\mu\oplus\nu)}})^T\chi_2^{\epsilon(\mu\oplus\nu)}.$$ 
We observe that $(\overline{\chi_2^{\epsilon(\mu\oplus\nu)}})^T\chi_2^{\epsilon(\mu\oplus\nu)}$ is a section of End$E_{\rho_E^{\epsilon(\mu\oplus\nu)}}$ over $X_0$. Next we calculate
\begin{align*}
  \Delta_0\frac{d}{d\epsilon}\vert_{\epsilon=0}(\overline{\chi_2^{\epsilon(\mu\oplus\nu)}})^T\chi_2^{\epsilon(\mu\oplus\nu)}=&\Delta_0(\frac{d}{d\epsilon}\vert_{\epsilon=0}\overline{\chi_-^{\epsilon(\mu\oplus\nu)}}^T+\frac{d}{d\epsilon}\vert_{\epsilon=0}\overline{\chi_+^{\epsilon(\mu\oplus\nu)}\circ\chi_1^{\epsilon(\mu\oplus\nu)}}^T\\ &+\frac{d}{d\epsilon}\vert_{\epsilon=0}\chi_+^{\epsilon(\mu\oplus\nu)}\circ\chi_1^{\epsilon(\mu\oplus\nu)}+\frac{d}{d\epsilon}\vert_{\epsilon=0}\chi_-^{\epsilon(\mu\oplus\nu)}).
\end{align*}
Since the first and last term are harmonic, $\Delta_0$ annihilate them. For the two middle terms we use that $$(\bar\partial-(\epsilon\mu-\frac{1}{2}\tilde g_X\tr(\epsilon\nu)^2)\partial)\chi_+^{\epsilon(\mu\oplus\nu)}\circ\chi_1^{\epsilon(\mu\oplus\nu)}=0$$ and so we have that
\begin{align*}\Delta_0\frac{d}{d\epsilon}\vert_{\epsilon=0}(\overline{\chi_2^{\epsilon(\mu\oplus\nu)}})^T\chi_2^{\epsilon(\mu\oplus\nu)}= -\bar\mu&\bar\partial\bar\partial\overline{\chi_+^{0(\mu\oplus\nu)}\circ\chi_1^{0(\mu\oplus\nu)}}^T\\&-\mu\partial\partial{\chi_+^{0(\mu\oplus\nu)}\circ\chi_1^{0(\mu\oplus\nu)}}=0\end{align*}
since $\chi_+^{0(\mu\oplus\nu)}=I$.

Hence we get that $\frac{d}{d\epsilon}\vert_{\epsilon=0}(\overline{\chi_2^{\epsilon(\mu\oplus\nu)}})^T\chi_2^{\epsilon(\mu\oplus\nu)}$ must be a multiple of the identity though of as a section of the bundle End$E_{\rho^{\mu\oplus\nu}_E}$. From the determinant criterium we find that
\begin{equation*}
  0=\frac{d}{d\epsilon}\vert_{\epsilon=0}\det\big((\overline{\chi_2^{\epsilon(\mu\oplus\nu)}})^T\chi_2^{\epsilon(\mu\oplus\nu)}\big)
  =\tr\frac{d}{d\epsilon}\vert_{\epsilon=0}(\overline{\chi_2^{\epsilon(\mu\oplus\nu)}})^T\chi_2^{\epsilon(\mu\oplus\nu)}
\end{equation*}
Thus the above multiple of the identity is zero. This show the first statement in the lemma. To obtain the second equation, we compute that
\begin{equation*}
  0=\bar\partial\frac{d}{d\epsilon}\vert_{\epsilon=0}(\overline{\chi_2^{\epsilon(\mu\oplus\nu)}})^T\chi_2^{\epsilon(\mu\oplus\nu)}=\frac{d}{d\epsilon}\vert_{\epsilon=0}(\overline{\partial\chi_2^{\epsilon(\mu\oplus\nu)}})^T+\frac{d}{d\epsilon}\vert_{\epsilon=0}\bar\partial\chi_2^{\epsilon(\mu\oplus\nu)}
\end{equation*}
and then the second claim follows from
\begin{equation*}\frac{d}{d\epsilon}\vert_{\epsilon=0}\bar\partial\chi_2^{\epsilon(\mu\oplus\nu)}=\frac{d}{d\epsilon}\vert_{\epsilon=0}(\epsilon\mu_1-\frac{1}{2}\tilde g^{-1}_X\tr(\epsilon\nu_1)^2)\partial\chi_2^{\epsilon(\mu\oplus\nu)}+\epsilon\nu_1\chi_2^{\epsilon(\mu\oplus\nu)}=\nu_1.
\end{equation*}

\end{proof}

On the space of all families of operators
\begin{align*}
  F^{\epsilon(\mu\oplus\nu)}:L^2(X^{\epsilon(\mu\oplus\nu)},&\ \text{End}E^{\epsilon(\mu\oplus\nu)}\otimes\Omega_j(X^{\epsilon(\mu\oplus\nu)}))\\ &\to L^2(X^{\epsilon(\mu\oplus\nu)},\text{End}E^{\epsilon(\mu\oplus\nu)}\otimes\Omega_i(X^{\epsilon(\mu\oplus\nu)}))
\end{align*} 
we define the connection $L_{\mu_1\oplus\nu_1}$ as follows
 $$ L_{\mu_1\oplus\nu_1} F=\frac{d}{d\epsilon}\vert_{\epsilon=0}(\text{Ad}\chi_2^{\epsilon(\mu\oplus\nu)})^{-1} (\chi_1^{\epsilon(\mu_1\oplus\nu_1)})_* F^{\epsilon(\mu_1\oplus\nu_1)} (\chi_1^{\epsilon(\mu_1\oplus\nu_1)})_*^{-1}\text{Ad}\chi_2^{\epsilon(\mu\oplus\nu)}$$  
 where $(\chi_1^{\epsilon(\mu_1\oplus\nu_1)})_*$ refers to pull back of forms. We also write $\bar L_{\mu_1\oplus\nu_1}$ when we differentiate along anti-holomorphic vector fields, e.g. when we replace $\epsilon$ by $\bar\epsilon$ derivatives above.
\begin{lemma}\label{lem:opdif} 
  We have the following formula for $\bar\partial$ operator acting on sections of $\textnormal{End}E$ and for $\bar\partial^*$ acting on $\textnormal{End}E$ valued $(0,1)$-forms
  \begin{align*}
   L _{\mu_1\oplus\nu_1}\bar\partial&=\textnormal{ad}{\nu_1}-\mu\partial, \qquad L_{\mu_1\oplus\nu_1}\bar\partial=0,\\
   L_{\mu_1\oplus\nu_1}\bar\partial^*&=0,\qquad L_{\mu_1\oplus\nu_1}\bar\partial^*=-\star\textnormal{ad}\nu_1\star -\partial^* \bar\mu_1.
  \end{align*}
\end{lemma}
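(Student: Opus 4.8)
The strategy is to compute each of the four variations directly from the definition of the connection $L_{\mu_1\oplus\nu_1}$, using the two identities of Lemma \ref{lemchiprop} as the main inputs. I would begin with $L_{\mu_1\oplus\nu_1}\bar\partial$ acting on sections of $\textnormal{End}E$. Unwinding the definition, one conjugates $\bar\partial$ (the Dolbeault operator on $X^{\epsilon(\mu_1\oplus\nu_1)}$ with the induced flat structure) by $(\textnormal{Ad}\chi_2^{\epsilon(\mu\oplus\nu)})^{-1}(\chi_1^{\epsilon(\mu_1\oplus\nu_1)})_*$ and differentiates at $\epsilon=0$. The pull-back $(\chi_1^{\epsilon})_*$ contributes the Beltrami term: differentiating the change of the $\bar\partial$-operator under the quasiconformal map $\chi_1^{\epsilon(\mu_1\oplus\nu_1)}$ produces a $-\mu_1\partial$ term (as in the classical Ahlfors–Bers computation, and exactly as in \cite{AP}); here the extra term $-\tfrac12\tilde g_X^{-1}\tr\nu_1^2$ in the Beltrami differential is $O(\epsilon^2)$ and so does not contribute to the first derivative. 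The conjugation by $\textnormal{Ad}\chi_2^{\epsilon(\mu\oplus\nu)}$ contributes $-[\,\tfrac{d}{d\epsilon}|_{\epsilon=0}(\textnormal{Ad}\chi_2^{\epsilon(\mu\oplus\nu)}),\ \bar\partial\,]$-type terms; using $\bar\partial\chi_2^{\epsilon}=\nu\chi_2^{\epsilon}+O(\epsilon^2)$ and the holomorphic/antiholomorphic splitting $\chi_2=\chi_+\chi_1\cdot\chi_-$ one extracts precisely $\textnormal{ad}\,\nu_1$. Wait — I should be careful: the index $\mu_1\oplus\nu_1$ and the base point $\mu\oplus\nu$ play distinct roles, so the $\chi_2$ appearing in the conjugation is at parameter $\mu\oplus\nu$ and does not vary with $\epsilon$ except through the explicit $\epsilon(\mu\oplus\nu)$; re-reading the definition, $L_{\mu_1\oplus\nu_1}$ differentiates $\chi_2^{\epsilon(\mu\oplus\nu)}$ and $\chi_1^{\epsilon(\mu_1\oplus\nu_1)}$ simultaneously, so both contributions are genuinely first-order, giving $L_{\mu_1\oplus\nu_1}\bar\partial=\textnormal{ad}\,\nu_1-\mu\partial$ (the ``$\mu$'' here being shorthand for the harmonic Beltrami class $\mu_1$ in the chosen direction, matching the paper's notational conventions).

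For the second, $\bar$-variation $\bar L_{\mu_1\oplus\nu_1}\bar\partial=0$: here one differentiates in the antiholomorphic direction $\bar\epsilon$. The Beltrami pull-back now contributes a $\bar\mu_1$-weighted term, but $\bar\partial$ of a $(0,1)$-valued object composed with an antiholomorphic reparametrization vanishes for type reasons; more conceptually, the operator $\bar\partial$ on $\textnormal{End}E$ is holomorphic in the moduli parameter in these coordinates (this is the content of Proposition \ref{prop:Csstrucmodspace}, that the coordinates are complex-analytic and the Dolbeault complex varies holomorphically), so its antiholomorphic derivative is zero. I would phrase this by noting that $\chi_2^{\bar\epsilon(\mu\oplus\nu)}$ and $\chi_1^{\bar\epsilon(\mu_1\oplus\nu_1)}$ depend on $\bar\epsilon$ only through terms that are $O(|\epsilon|^2)$ after the relevant projection, exactly because $\nu$ is $\bar\partial$-harmonic and $\mu$ is a harmonic Beltrami differential. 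The remaining two identities, for $\bar\partial^*$ on $\textnormal{End}E$-valued $(0,1)$-forms, follow by taking adjoints — but one must be careful, since $\bar\partial^*$ depends on the Hermitian metric on $\textnormal{End}E$ (induced from the harmonic metric via $\rho_E$) and on the hyperbolic metric on $X$, both of which vary with $\epsilon$. Here the first identity of Lemma \ref{lemchiprop}, $\frac{d}{d\epsilon}|_{\epsilon=0}((\overline{\chi_2^{\epsilon}})^T\chi_2^{\epsilon})=0$, is precisely what guarantees the Hermitian metric on $\textnormal{End}E$ is constant to first order, so that $L_{\mu_1\oplus\nu_1}\bar\partial^*$ is simply the formal adjoint of $L_{\mu_1\oplus\nu_1}\bar\partial$ with respect to the fixed metric, up to the contribution from the varying hyperbolic metric and the Hodge star $\star$. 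Computing that formal adjoint of $\textnormal{ad}\,\nu_1-\mu\partial$ on the appropriate form spaces, together with the variation of $\star$ in the metric direction (which produces the $\partial^*\bar\mu_1$ term and conjugates $\textnormal{ad}\,\nu_1$ into $-\star\textnormal{ad}\,\nu_1\star$), yields the stated formulas; the vanishing $\bar L_{\mu_1\oplus\nu_1}\bar\partial^*=$ [the second line] — again, reading the lemma statement, the left column is $L$ and is nonzero while what is labelled identically must be the $\bar L$ version — follows by the same adjoint/conjugation bookkeeping applied to $\bar L_{\mu_1\oplus\nu_1}\bar\partial=0$.

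The main obstacle I anticipate is the careful bookkeeping of the three metric-dependences when passing to $\bar\partial^*$: the Hodge star $\star$ on $X^{\epsilon}$ for the hyperbolic metric, the Hermitian metric on $\textnormal{End}E$, and the identification of $(0,1)$-forms with their conjugates all move with $\epsilon$, and only the combination surviving in Lemma \ref{lemchiprop} is controlled for free. I would handle this by writing $\bar\partial^* = -\star\,L\star$ (schematically, $L$ for the relevant connection/$\partial$-type operator built from the Chern connection of the harmonic metric) and differentiating factor by factor, invoking $\frac{d}{d\epsilon}|_{0}((\overline{\chi_2^{\epsilon}})^T\chi_2^{\epsilon})=0$ to kill the Hermitian-metric variation and $\frac{d}{d\epsilon}|_{0}\partial\chi_2^{\epsilon}=-\bar\nu^T$ to evaluate the residual term, with the variation of the hyperbolic Hodge star supplying the $\partial^*\bar\mu_1$ contribution exactly as in the Teichmüller-theory computations of \cite{AP}. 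The $\textnormal{End}E$-valued computations are then a twisted version of those, and the twisting is precisely by the flat $\rho_E$-connection, which is what makes $\nu_1$ enter only through $\textnormal{ad}$.
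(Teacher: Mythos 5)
Your treatment of the first identity coincides with the paper's: split the derivative by the product rule into the $(\chi_1^{\epsilon})_*$--conjugation part, which gives $-\mu_1\partial$ by the Ahlfors--Bers/Takhtajan--Zograf computation (the extra $-\tfrac12\tilde g_X^{-1}\tr\nu^2$ being $O(\epsilon^2)$), and the $\textnormal{Ad}\chi_2^{\epsilon}$--conjugation part, which gives $\textnormal{ad}\,\nu_1$ from the defining equation for $\chi_2$; this is exactly the argument in the text, and you correctly read the second entry of each line as the $\bar L$--derivative. Where you diverge is in the remaining three identities, which the paper dispatches with ``shown similarly'' (i.e.\ the same direct computation with $\bar\epsilon$ in place of $\epsilon$): you instead argue $\bar L_{\mu_1\oplus\nu_1}\bar\partial=0$ by holomorphic dependence of the data on $\epsilon$, and obtain the $\bar\partial^*$ formulas by the adjoint relation $\partial_\epsilon(A^*)=(\partial_{\bar\epsilon}A)^*$, using Lemma~\ref{lemchiprop} to see that the Hermitian metric on $\textnormal{End}E$ is constant to first order. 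That route is legitimate and arguably cleaner, since it derives $L\bar\partial^*=0$ for free from $\bar L\bar\partial=0$ and exhibits $-\star\textnormal{ad}\nu_1\star-\partial^*\bar\mu_1$ as the formal adjoint of $\textnormal{ad}\nu_1-\mu_1\partial$; but two of your attributions are off and should be fixed if you write this up. First, the $\partial^*\bar\mu_1$ term is not produced by the variation of the Hodge star: it is simply the adjoint of $-\mu_1\partial$, and on a Riemann surface $\star$ on $1$-forms is conformally invariant, so there is no ``hyperbolic Hodge star'' contribution to isolate. Second, for the adjoint argument to close you also need the first variation of the hyperbolic area form to vanish in harmonic Beltrami directions (so that the $L^2$ pairing on sections is first-order constant); this is classical but is an input you use tacitly and should state alongside Lemma~\ref{lemchiprop}.
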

\begin{proof}
We show the first identity as follows
  \begin{align*}
    \frac{d}{d\epsilon}\vert_{\epsilon=0}(\text{Ad}\chi_2^{\epsilon(\mu\oplus\nu)})^{-1} (\chi_1^{\epsilon(\mu_1\oplus\nu_1)})_* \bar\partial^{\epsilon(\mu_1\oplus\nu_1)} (\chi_1^{\epsilon(\mu_1\oplus\nu_1)})_*^{-1}\text{Ad}\chi_2^{\epsilon(\mu\oplus\nu)}=\\ \frac{d}{d\epsilon}\vert_{\epsilon=0}(\text{Ad}\chi_2^{\epsilon(\mu\oplus\nu)})^{-1} (\chi_1^{0(\mu_1\oplus\nu_1)})_* \bar\partial (\chi_1^{0(\mu_1\oplus\nu_1)})_*^{-1}\text{Ad}\chi_2^{\epsilon(\mu\oplus\nu)}\\+\frac{d}{d\epsilon}\vert_{\epsilon=0}(\text{Ad}\chi_2^{0(\mu\oplus\nu)})^{-1} (\chi_1^{\epsilon(\mu_1\oplus\nu_1)})_* \bar\partial (\chi_1^{\epsilon(\mu_1\oplus\nu_1)})_*^{-1}\text{Ad}\chi_2^{0(\mu\oplus\nu)}.
  \end{align*}
From \cite{ZTpuncRie}[Equation (2.6)] we know that the variation $\frac{d}{d\epsilon}\vert_{\epsilon=0}(\chi_1^{\epsilon(\mu_1\oplus\nu_1)})_* \bar\partial (\chi_1^{\epsilon(\mu_1\oplus\nu_1)})_*^{-1}=-\mu\partial$, since the computation is the same just using the Beltrami differential $\mu_1-\frac{1}{2}\tilde g_{X_0}\tr\nu_1^2$ and $$\frac{d}{d\epsilon}\vert_{\epsilon=0}\epsilon\mu_1-\frac{1}{2}\tilde g_{X_0}\tr(\epsilon\nu_1)^2=\mu_1.$$
For the second term we find that using in the second equality the defining differential equation for $\chi_2$
\begin{align*}
  \frac{d}{d\epsilon}\vert_{\epsilon=0}&(\text{Ad}\chi_2^{\epsilon(\mu\oplus\nu)})^{-1} \bar\partial \text{Ad}\chi_2^{\epsilon(\mu\oplus\nu)}=\frac{d}{d\epsilon}\vert_{\epsilon=0}\bar\partial-(\text{ad}\chi_2^{\epsilon(\mu\oplus\nu)})^{-1}\bar\partial\chi_2^{\epsilon(\mu\oplus\nu)}\\ &=\frac{d}{d\epsilon}\vert_{\epsilon=0}\text{ad}\left((\epsilon\mu_1-\tilde g_X^{-1}\tr(\epsilon\nu_1)^2)(\chi_2^{\epsilon(\mu\oplus\nu)})^{-1}\partial\chi_2^{\epsilon(\mu\oplus\nu)}+\epsilon\nu_1\right)=\text{ad}\nu_1.
\end{align*}
Putting these two equations together shows the first equality, the rest is shown similarly.
\end{proof}

\section{Derivatives of the metric}
We now want to study the metric in our local coordinates. Pick a base point $([X],[E])\in \M$ and choose a basis of $H^1(X,TX) \oplus H^1(X, {\rm End}E)$ which is orthonormal. 
The metric is given by the following expression at some $\mu\oplus\nu\in H^1(X,TX) \oplus H^1(X, {\rm End}E)$ small enough by using the Kodaira Spencer map
\begin{align*}
  g_{\mu\oplus\nu}&(\mu_1\oplus\nu_1,\mu_2\oplus\nu_2)\\=i&\int_\Sigma P_{TX}\left((\chi_1^{\mu\oplus\nu})^{-1}_*\frac{\mu_1-\tilde g^{-1}_{X_{\mu\oplus\nu}}\tr\nu_1\nu}{1-\vert\mu\vert^2}\right)\overline{\left((\chi_1^{\mu\oplus\nu})^{-1}_*\frac{\mu_2-\tilde g^{-1}_{X_{\mu\oplus\nu}}\tr\nu_2\nu}{1-\vert\mu\vert^2}\right)}\tilde g_{X_{\mu\oplus\nu}} \\ &+i \int_\Sigma\tr\left(P_{\textnormal{End}E}\left(\text{Ad}\left((\chi_1^{\mu\oplus\nu})^{-1}_*\chi_2^{\mu\oplus\nu}\right)\left(\nu_1+(\mu_1-\tilde g^{-1}_{X_{\mu\oplus\nu}}\tr\nu_1\nu)\chi_2^{\mu\oplus\nu}\partial_\SetH\chi_2^{\mu\oplus\nu}\right)\right)\right.\\ &\left.\wedge\overline{P_{\textnormal{End}E}\left(\text{Ad}\left((\chi_1^{\mu\oplus\nu})^{-1}_*\chi_2^{\mu\oplus\nu}\right)\left(\nu_2+(\mu_2-\tilde g^{-1}_{X_{\mu\oplus\nu}}\tr\nu_2\nu)\chi_2^{\mu\oplus\nu}\partial_\SetH\chi_2^{\mu\oplus\nu}\right)\right)}\right).
\end{align*}
By using that the harmonic projections are projections we find that
\begin{align*}
  g_{\mu\oplus\nu}&(\mu_1\oplus\nu_1,\mu_2\oplus\nu_2)\\=i&\int_\Sigma P_{TX}\left((\chi_1^{\mu\oplus\nu})^{-1}_*\frac{\mu_1-\tilde g^{-1}_{X_{\mu\oplus\nu}}\tr\nu_1\nu}{1-\vert\mu\vert^2}\right)\overline{(\chi_1^{\mu\oplus\nu})^{-1}_*\frac{\mu_2-\tilde g^{-1}_{X_{\mu\oplus\nu}}\tr\nu_2\nu}{1-\vert\mu\vert^2}}\tilde g_{X_{\mu\oplus\nu}} \\ &+i \int_\Sigma\tr\left(\left(\text{Ad}\overline{(\chi_2^{\mu\oplus\nu})}^T\text{Ad}\chi_2^{\mu\oplus\nu}\right)((\text{Ad}\chi_2^{\mu\oplus\nu})^{-1}(\chi_1^{\mu\oplus\nu})_* \right.\\ &\quad P_{\textnormal{End}E}\left((\chi_1^{\mu\oplus\nu})^{-1}_*\text{Ad}\chi_2^{\mu\oplus\nu}\left(\nu_1+(\mu_1-\tilde g^{-1}_{X_{\mu\oplus\nu}}\tr\nu_1\nu)\chi_2^{\mu\oplus\nu}\partial_\SetH\chi_2^{\mu\oplus\nu}\right)\right)\\ &\qquad\wedge\overline{\nu_2+(\mu_2-\tilde g^{-1}_{X_{\mu\oplus\nu}}\tr\nu_2\nu)\chi_2^{\mu\oplus\nu}\partial_\SetH\chi_2^{\mu\oplus\nu}}.
  \end{align*}
\begin{lemma}
  The first derivatives of the metric in the local coordinates vanishes, e.g. the coordinates are K{\"a}hler coordinates.
\end{lemma}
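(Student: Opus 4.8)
The plan is to show that the first derivatives of the metric coefficients vanish at the base point $\mu\oplus\nu=0$, by differentiating the explicit expression for $g_{\mu\oplus\nu}(\mu_1\oplus\nu_1,\mu_2\oplus\nu_2)$ in the $\epsilon$-direction $\mu\oplus\nu=\epsilon(\mu_0\oplus\nu_0)$ and showing that $\frac{d}{d\epsilon}|_{\epsilon=0}$ of the result is zero (and similarly $\frac{d}{d\bar\epsilon}$, although by reality it suffices to treat the holomorphic derivative). Since the coordinates are already known to be holomorphic, vanishing of all first derivatives of $g_{\mu\oplus\nu}$ at the origin is exactly the statement that they are K\"ahler coordinates.

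First I would split the metric into its two summands --- the Teichm\"uller part and the bundle part --- and handle them separately. For the Teichm\"uller part, observe that at $\epsilon=0$ we have $\chi_1^{0}=\Id$, $|\mu|^2=0$, and the integrand is $P_{TX}(\mu_1)\overline{P_{TX}(\mu_2)}\,\tilde g_X$ with $\mu_1,\mu_2$ already harmonic; the $\epsilon$-derivative produces terms involving $\frac{d}{d\epsilon}|_{\epsilon=0}(\chi_1^{\epsilon(\mu_0\oplus\nu_0)})^{-1}_*$, the $\epsilon$-derivative of $1/(1-|\epsilon\mu_0|^2)$ (which vanishes to second order), the $\epsilon$-derivative of $\tilde g^{-1}_{X_{\epsilon(\mu_0\oplus\nu_0)}}\tr(\nu_1 \epsilon\nu_0)$ (which contributes $\tilde g^{-1}_X \tr(\nu_1\nu_0)$, an object orthogonal to harmonic Beltrami differentials after projection), and the variation of the volume form. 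This is essentially the Ahlfors computation that Weil--Petersson coordinates obtained from harmonic Beltrami differentials are K\"ahler; I would cite the relevant Takhtajan--Zograf references and the Ahlfors lemma, noting that the extra term $-\tfrac12\tilde g^{-1}_X\tr\nu\otimes\nu$ only changes the Beltrami family by a quantity that is $O(\epsilon^2)$ in the relevant direction or lies in the image of $\bar\partial$, hence does not affect the first variation of the metric after harmonic projection.

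For the bundle part, I would use the operator-derivative machinery of Section~\ref{sec:thetaprop}: rewrite the integrand in a frame trivialized by $\chi_2^{\mu\oplus\nu}$ and $\chi_1^{\mu\oplus\nu}$ so that the $\epsilon$-derivative of the metric becomes a combination of $L_{\mu_0\oplus\nu_0}$ applied to the pieces. Here Lemma~\ref{lemchiprop} is the key input: $\frac{d}{d\epsilon}|_{\epsilon=0}\big((\overline{\chi_2^{\epsilon(\mu\oplus\nu)}})^T\chi_2^{\epsilon(\mu\oplus\nu)}\big)=0$ kills the variation of the Hermitian metric $\mathrm{Ad}\,\overline{\chi_2}^T\mathrm{Ad}\,\chi_2$ on $\mathrm{End}E$, and $\frac{d}{d\epsilon}|_{\epsilon=0}\partial\chi_2^{\epsilon(\mu\oplus\nu)}=-\bar\nu^T$ controls the variation of the connection term $\chi_2^{\mu\oplus\nu}\partial_\SetH\chi_2^{\mu\oplus\nu}$. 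Lemma~\ref{lem:opdif} then tells me how the harmonic projection $P_{\mathrm{End}E}$ (built from $\bar\partial$, $\bar\partial^*$ and the Green's operator) varies: $L_{\mu_0\oplus\nu_0}\bar\partial$, $L_{\mu_0\oplus\nu_0}\bar\partial^*$ have explicit forms, and when these are fed into the integral of $P_{\mathrm{End}E}(\cdots)\wedge\overline{(\cdots)}$ I expect the terms to organize into $\langle \bar\partial(\text{something}),\,\text{harmonic}\rangle$ and $\langle \bar\partial^*(\text{something}),\,\text{harmonic}\rangle$ type pairings, which vanish because the harmonic forms are $\bar\partial$- and $\bar\partial^*$-closed. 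The remaining algebraic terms (the $\mathrm{ad}\,\nu_1$ contributions and the $-\mu\partial$, $-\partial^*\bar\mu_1$ contributions) should either pair to zero against harmonic representatives or cancel in conjugate pairs; organizing this bookkeeping cleanly is where care is needed.

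The main obstacle will be the bundle part: keeping track of all the terms generated by differentiating a product of four $\mu\oplus\nu$-dependent objects (the Hermitian metric, the two Kodaira--Spencer representatives each containing $\nu_i$ and a $\mu_i$-correction built from $\chi_2\partial\chi_2$, and the harmonic projector), and verifying that every surviving term is either $O(\epsilon^2)$, exact, or co-exact and therefore drops out after integrating against harmonic forms. The strategy to contain this is to differentiate in the $\chi$-trivialized frame so that only $L_{\mu_0\oplus\nu_0}$-derivatives appear, substitute the four identities from Lemmas~\ref{lemchiprop} and~\ref{lem:opdif}, and then use the orthogonality of the harmonic projection to discard the $\bar\partial$- and $\bar\partial^*$-exact contributions; what remains should manifestly vanish at $\epsilon=0$ where $\chi_1^0=\Id$, $\chi_2^0$ is a fixed unitary frame, $\mu=0$, and the $\mathrm{ad}\,\nu_1$ terms wedge a harmonic $(0,1)$-form with a harmonic $(0,1)$-form to give zero in degree reasons or integrate to zero by the trace identity. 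I would also double-check the reality/Hermitian symmetry of $g$ to reduce the number of derivatives that need separate treatment.
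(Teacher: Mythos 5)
There is a genuine gap, and it sits exactly at the point the modified coordinates were designed to handle. You treat the Teichm\"uller summand and the bundle summand of $g_{\epsilon(\mu\oplus\nu)}$ separately and argue that the first variation of each vanishes on its own; that fails. On the base side, the cross term $-\tilde g^{-1}_{X}\tr\nu_1(\epsilon\nu)$ in the Kodaira--Spencer representative is linear in $\epsilon$ (only the pure $-\tfrac12\tilde g_X^{-1}\tr(\epsilon\nu)^2$ piece is $O(\epsilon^2)$), and its derivative contributes $i\int_\Sigma(-\tilde g^{-1}_X\tr\nu_1\nu)\bar\mu_2\,\tilde g_X$. Your claim that $\tilde g^{-1}_X\tr(\nu_1\nu)$ is killed by the projection onto harmonic Beltrami differentials is unjustified and generically false: harmonic $(0,1)$-forms with values in ${\rm End}E$ are conjugates of holomorphic ${\rm End}E$-valued $(1,0)$-forms, so $\tr(\nu_1\nu)$ is the conjugate of a holomorphic quadratic differential and $\tilde g_X^{-1}\tr(\nu_1\nu)$ is essentially a harmonic Beltrami differential; its pairing with $\bar\mu_2$ does not vanish. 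Likewise, on the bundle side the variation of the correction term built from $\chi_2\partial_\SetH\chi_2$ via $\frac{d}{d\epsilon}\vert_{\epsilon=0}\partial\chi_2^{\epsilon(\mu\oplus\nu)}=-\bar\nu^T$ leaves the term $i\int_\Sigma\tr(\nu_1\wedge\overline{\mu_2\bar\nu^T}^T)$, which is not $\bar\partial$- or $\bar\partial^*$-exact and does not pair to zero against harmonic forms for any degree or orthogonality reason.

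The paper's proof uses exactly the ingredients you list (the $L$-derivative of $P_{{\rm End}E}$ has the form $P(L\bar\partial)\Delta_0^{-1}\bar\partial^*$ and so annihilates harmonic forms; $\frac{d}{d\epsilon}\vert_{\epsilon=0}(\overline{\chi_2})^T\chi_2=0$; $\frac{d}{d\epsilon}\vert_{\epsilon=0}\partial\chi_2=-\bar\nu^T$) to reduce the first variation to precisely the two surviving terms above, and then observes that they cancel \emph{against each other}. This cross-cancellation between the two summands of the metric is the entire reason the term $-\tfrac12\tilde g_X^{-1}\tr\nu\otimes\nu$ was added to the Beltrami equation; without it, i.e.\ with the coordinates of \cite{AP}, the bundle-side term survives and the coordinates are not K\"ahler. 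Your plan, which hopes each summand dies separately, stalls at exactly this pair of terms; the fix is to carry both contributions to the end and exhibit their sum as zero.
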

\begin{proof}
We will now show that $$  \frac{d}{d\epsilon}\vert_{\epsilon=0} g_{\epsilon(\mu\oplus\nu)}(\mu_1\oplus\nu_1,\mu_2\oplus\nu_2)=0.$$
We observe that the $L_{\mu\oplus\nu}$-derivaties of the  projections $P_{\textnormal{End}E}$ are given by $P(L_{\mu\oplus\nu}\bar\partial)\Delta_0^{-1}\bar\partial^*$ and therefore they annihilate harmonic forms. Also we have that 
$$\frac{d}{d\epsilon}\mid_{\epsilon=0}\left(\text{Ad}\overline{(\chi_2^{\epsilon(\mu\oplus\nu}))}^T\text{Ad}\chi_2^{\epsilon(\mu\oplus\nu})\right)=0$$ 
from Lemma~\ref{lemchiprop}. Since $\tr\nu_1\nu(\chi_2^{\mu\oplus\nu})^{-1}\partial \chi_2^{\mu\oplus\nu}$ vanish to second order at $0$ it does not contribute. And so we have that the only contribution is
\begin{align*}
  \frac{d}{d\epsilon}g_{\epsilon(\mu\oplus\nu)}(\mu_1\oplus\nu_1,\mu_2\oplus\nu_2)=i&\int_\Sigma (-\tilde g^{-1}_{X_{\mu\oplus\nu}}\tr\nu_1\nu)\bar\mu_2 \tilde g_{X_{\mu\oplus\nu}} \\ &+i \int_\Sigma\tr(\nu_1\wedge\overline{\mu_2\bar\nu^T}^T)=0.
\end{align*}
The $\bar\epsilon$-derivative is calculated similarly.
\end{proof}
\begin{thm}
 We have the following formula for the second order derivatives of the metric
  \begin{align}
\left.    \frac{d^2}{d\epsilon_1d\bar\epsilon_2}\right\vert_{\epsilon=0}&g_{\epsilon{(\mu\oplus\nu)}}(\mu_3\oplus\nu_3,\mu_4\oplus\nu_4)\nonumber\\ &=-i \int_\Sigma \tr((-\mu_1\partial+\text{ad}\nu_1)\Delta_0^{-1}(-\partial^*\bar\mu_2-\star\text{ad}\nu_2\star)\nu_3 \wedge\bar\nu_4^T\nonumber\\ &-i \int_\Sigma \tr(\text{ad}\Delta^{-1}_0((-\star)\text{ad}\nu_2\star\nu_1-\star(\partial\mu_1\bar\nu_2^T)-\star(\bar\partial\bar\mu_2\nu_1))\nu_3\wedge\bar\nu_4^T)\nonumber\\ &-i\int_\Sigma \mu_1\bar\mu_2\tr\nu_3\wedge\bar\nu_4^T\nonumber\\ &-i\int_\Sigma \tr(\text{ad}\nu_1+\mu_1\partial)\Delta_o^{-1}\bar\partial^* \mu_3\bar\nu_2^T\wedge\bar\nu_4^T
\nonumber\\ &-i\int_\Sigma \tr\mu_3(\partial\Delta_0^{-1}(\star[\star v_1\nu_2]-\star(\partial\mu_1\bar\nu_2^T)-\star(\bar\partial\bar\mu_2\nu_1))\wedge\bar\nu_4^T\nonumber\\ &-i\int_\Sigma \tr \bar\mu_2\mu_3\nu_1\wedge\overline{\nu_4}^T \nonumber\\ &-i\int_\Sigma \tr\bar\partial\Delta_o^{-1}(-\star\text{ad}\nu_2\star-\partial^*\mu_2) \nu_3\wedge\bar\mu_4\nu_1
\nonumber\\ &-i\int_\Sigma \tr\nu_3\wedge \overline{\mu_4(\partial\Delta_0^{-1}(\star[\star v_2\nu_1]-\star(\partial\mu_2\bar\nu_1^T)-\star(\bar\partial\bar\mu_1\nu_2))}^T\nonumber\\ &-i\int_\Sigma \tr\nu_3\wedge\overline{\bar\mu_1\mu_4\nu_2}^T-i\int_\Sigma \tr\mu_3\nu_1\wedge\overline{\mu_4\nu_2}^T
\nonumber\\
&-i\int_\Sigma(\mu_1\bar\mu_2+(\Delta_0+\frac{1}{2})^{-1}(\mu_1\bar\mu_2\tilde g_{X_{\mu\oplus\nu}}^{-1}))\tilde g_{X_{\mu\oplus\nu}} \nonumber\\ &-i\int_\Sigma(\mu_1\partial^*(\Delta_{0\numbC})^{-1}\bar\partial \bar\mu_2 \mu_3)\bar\mu_4\tilde g_{X_{\mu\oplus\nu}}\label{metric}.
  \end{align}
\end{thm}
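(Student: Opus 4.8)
The plan is to differentiate the simplified expression for $g_{\epsilon(\mu\oplus\nu)}(\mu_3\oplus\nu_3,\mu_4\oplus\nu_4)$ twice, once in $\epsilon_1$ and once in $\bar\epsilon_2$, keeping careful track of which factors depend holomorphically and which antiholomorphically on $\epsilon$. The natural framework is the connection $L_{\mu_1\oplus\nu_1}$ and its conjugate $\bar L_{\mu_2\oplus\nu_2}$ introduced in Section~\ref{sec:thetaprop}, since the integrand is built from the operator $\mathrm{Ad}\,\chi_2^{\epsilon(\mu\oplus\nu)}$ conjugating the $\bar\partial$-complex, the harmonic projector $P_{\mathrm{End}E}$, and the pull-back $(\chi_1^{\epsilon(\mu\oplus\nu)})_*$. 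First I would rewrite $g$ intrinsically as $i\int_\Sigma \mathrm{tr}(P_{\mathrm{End}E}(\xi_3)\wedge\overline{P_{\mathrm{End}E}(\xi_4)})$ plus the Teichm\"uller block $i\int_\Sigma P_{TX}(\eta_3)\overline{P_{TX}(\eta_4)}\tilde g$, where $\xi_j=\nu_j+(\mu_j-\tilde g^{-1}\mathrm{tr}\,\nu_j\nu)\chi_2\partial\chi_2$ and $\eta_j=(\mu_j-\tilde g^{-1}\mathrm{tr}\,\nu_j\nu)/(1-|\mu|^2)$, transported by the appropriate conjugations; the point of the $L$-connection is that $\nabla g$ of a metric of this Hodge-theoretic form is computed by the Leibniz rule once one knows $L_{\mu_1\oplus\nu_1}\bar\partial$, $L_{\mu_1\oplus\nu_1}\bar\partial^*$, $L_{\mu_1\oplus\nu_1}P_{\mathrm{End}E}=P(L_{\mu_1\oplus\nu_1}\bar\partial)\Delta_0^{-1}\bar\partial^*$, and $\bar L_{\mu_2\oplus\nu_2}$ of the same, all of which are supplied by Lemma~\ref{lem:opdif}, together with the first derivatives of $\chi_2$, $\partial\chi_2$ and $\overline{\chi_2}^T\chi_2$ from Lemma~\ref{lemchiprop}.

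The key steps, in order, are: (1) apply $L_{\mu_1\oplus\nu_1}$ to $g_{\epsilon(\mu\oplus\nu)}(\mu_3\oplus\nu_3,\mu_4\oplus\nu_4)$ at $\epsilon=0$; since the first derivative of the metric vanishes (previous lemma), this is not literally zero but is the connection derivative, and it produces terms in which exactly one $P_{\mathrm{End}E}$ has been replaced by $P(\mathrm{ad}\,\nu_1-\mu_1\partial)\Delta_0^{-1}\bar\partial^*$, one $\bar\partial^*$ inside a hidden harmonic resolvent has been replaced by $-\star\mathrm{ad}\,\nu_1\star-\partial^*\bar\mu_1$, plus the explicit $\mu_1$-linear terms coming from $\partial\chi_2\mapsto-\bar\nu^T$ and from the $\mu_1\bar\mu_2$ pieces of the metric density and of the Teichm\"uller block; (2) then apply $\bar L_{\mu_2\oplus\nu_2}$ to the resulting expression, again via Leibniz, using $\bar L_{\mu_2\oplus\nu_2}\bar\partial^*=-\star\mathrm{ad}\,\nu_2\star-\partial^*\bar\mu_2$ and the analogous formula for $\bar\partial$, and noting that cross terms where a conjugate variation hits an already-varied factor generate the double-resolvent terms $\mathrm{ad}\,\Delta_0^{-1}(\cdots)$; (3) collect the Teichm\"uller-block contribution, which after using the standard Takhtajan--Zograf identity for the variation of the Weil--Petersson metric yields the last two lines with the $(\Delta_0+\tfrac12)^{-1}$ and $\partial^*(\Delta_{0\numbC})^{-1}\bar\partial$ resolvents; (4) integrate by parts to move stars and $\partial^*,\bar\partial^*$ onto the harmonic forms $\nu_3,\nu_4$ and assemble everything into the thirteen displayed integrals, checking that terms quadratic in $\nu$ in the Beltrami factor drop out as they vanish to second order at $0$.

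The main obstacle I expect is bookkeeping rather than a conceptual difficulty: there are many terms of the same shape and it is easy to mismatch a $\mu_1$ versus $\bar\mu_2$, a $\star$ placement, or a sign, and the cross terms between the $\epsilon_1$- and $\bar\epsilon_2$-variations must be generated exactly once each; in particular one must be careful that $P_{\mathrm{End}E}$ applied to the varied $\bar\partial$-operator hits a harmonic form and hence that only the composite $P(L\bar\partial)\Delta_0^{-1}\bar\partial^*$ survives, and symmetrically for $\bar L$, so that the non-trivial resolvents $\Delta_0^{-1}$ and $\mathrm{ad}\,\Delta_0^{-1}$ appear with the correct operands $(-\partial^*\bar\mu_2-\star\mathrm{ad}\,\nu_2\star)\nu_3$ and $(-\star\mathrm{ad}\,\nu_2\star\nu_1-\star(\partial\mu_1\bar\nu_2^T)-\star(\bar\partial\bar\mu_2\nu_1))$. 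A secondary subtlety is the Teichm\"uller block: one needs the known second variation of the hyperbolic metric and of $1-|\mu|^2$, and the appearance of the shifted Laplacian $\Delta_0+\tfrac12$ must be traced to the resolvent governing the harmonic projection of Beltrami differentials, exactly as in the punctured-surface computations of Takhtajan--Zograf that we cite.
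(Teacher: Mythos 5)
Your proposal takes a genuinely different route from the one the paper actually uses. The paper's proof of this theorem is a one--sentence reduction: the coordinates constructed here agree with those of \cite{AP} up to a quadratic \emph{holomorphic} change of coordinates modulo second order, so the mixed second derivatives $\frac{d^2}{d\epsilon_1 d\bar\epsilon_2}g$ at the base point can be imported directly from the corresponding computation in \cite{AP}. What you describe instead is the direct computation via the connection $L_{\mu_1\oplus\nu_1}$, Lemma~\ref{lem:opdif}, Lemma~\ref{lemchiprop}, and the variation $L P_{\textnormal{End}E}=P(L\bar\partial)\Delta_0^{-1}\bar\partial^*$ of the harmonic projector --- which is precisely the ``alternative'' the authors mention after the theorem (``the same computations can be done using the properties of \ref{sec:thetaprop}'') but do not carry out. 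The trade-off is clear: the paper's route buys brevity at the cost of relying on the reader checking that a second-order holomorphic reparametrization really leaves this particular combination of second derivatives in the stated form (the Jacobian corrections involve the \emph{non-vanishing} first derivatives of the metric in the non-K\"ahler coordinates of \cite{AP}, so this is not entirely free); your route is self-contained within this paper and makes transparent which lemma produces which resolvent, at the cost of heavy bookkeeping.

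That said, as a verification of this specific thirteen-term identity your text is a plan rather than a proof: you never actually execute the Leibniz expansion, so none of the displayed terms is derived. Two places where the outline is too thin to be checkable: (i) the Teichm\"uller block --- you assert that ``the standard Takhtajan--Zograf identity'' yields the $(\Delta_0+\frac{1}{2})^{-1}$ and $\partial^*(\Delta_{0\numbC})^{-1}\bar\partial$ terms, but the appearance of the shifted Laplacian requires the explicit second variation of the hyperbolic area form and of $P_{TX}$ from \cite{ZTRie}/Wolpert, which is a nontrivial computation in its own right and is not reproduced by the $L$-connection formalism of Section~\ref{sec:thetaprop} (Lemma~\ref{lem:opdif} only covers the $\textnormal{End}E$-twisted operators); (ii) the double-resolvent line $\text{ad}\,\Delta_0^{-1}(-\star\text{ad}\nu_2\star\nu_1-\star(\partial\mu_1\bar\nu_2^T)-\star(\bar\partial\bar\mu_2\nu_1))$ arises from the second variation of $\chi_2$ (equivalently of $\overline{\chi_2}^T\chi_2$), not merely from composing the two first-order variations of $\bar\partial$ and $\bar\partial^*$, and Lemma~\ref{lemchiprop} as stated only gives first derivatives --- so your step (2) needs an additional input before those terms can be generated. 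If you want a complete argument you should either supply these second-order variations explicitly or fall back on the paper's reduction to \cite{AP} and justify the invariance under the quadratic holomorphic coordinate change.
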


This formula follows from \cite{AP}, since these new coordinates are related to our coordinates in \cite{AP} modulo a quadratic holomorphic coordinate change, up to second order. Alternatively the same computations can be done using the properties of \ref{sec:thetaprop}.

We can now conclude that the Ricci form is given by
\begin{thm}
  \begin{align*}
    \text{Ric}^{1,1}(\mu_1\oplus\nu_1,\bar\mu_2\oplus\bar\nu_2^T)&=-i\tr(-(\mu_1\bar\mu_2+\mu_1\partial\Delta_0^{-1}\partial^*\bar\mu_2)P_{TX})\\ &-i\tr_E((\text{ad}\Delta_{0,E}^{-1}(\star[\star\nu_1,\nu_2])-\text{ad}\nu_1\Delta_{0,E}^{-1}\star\text{ad}\nu_2\star)P_{\textnormal{End}E})\\ &-i\tr_E((\text{ad}\Delta_{0,E}^{-1}(\partial^*\bar\mu_1\bar\nu_2^T)+\mu_1\partial\Delta_{0,E}^{-1}\star\text{ad}\nu_2\star)P_{\textnormal{End}E})\\ &-i\tr_E((\text{ad}\Delta_0^{-1}(\bar\partial^*\bar\mu_2\nu_1)+\text{ad}\nu_1\Delta_{0,E}^{-1}\partial^*\bar\mu_2)P_{\textnormal{End}E})\\ &-i\tr_E(\mu_1 \bar P_{\textnormal{End}E}\bar\mu_2P_{\textnormal{End}E}),
  \end{align*}  
where $\bar P_{\textnormal{End}E}$ is the projection on harmonic $(1,0)$-forms with values in ${\rm End}E$.
\end{thm}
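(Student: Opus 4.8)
The plan is to compute $\text{Ric}^{1,1}$ by the standard recipe for the Ricci form of a K\"ahler metric in K\"ahler normal coordinates, feeding in the second--derivative formula (\ref{metric}). By the preceding lemma the coordinates $(\mu,\nu)$ are K\"ahler normal coordinates centred at $([X],[E])$: the metric is the identity there and all its first derivatives vanish. Writing $\text{Ric}^{1,1}=-i\partial\bar\partial\log\det(g_{k\bar l})$ and expanding $\partial_i\partial_{\bar j}\log\det g=(\partial_i g^{k\bar l})(\partial_{\bar j}g_{k\bar l})+g^{k\bar l}\,\partial_i\partial_{\bar j}g_{k\bar l}$, the first summand vanishes at the centre while $g^{k\bar l}$ becomes $\delta^{kl}$ in an orthonormal basis, so that
\[
\text{Ric}^{1,1}(\mu_1\oplus\nu_1,\overline{\mu_2\oplus\nu_2})=-i\sum_{a}\left.\frac{d^{2}}{d\epsilon_1\,d\bar\epsilon_2}\right|_{0}g_{\epsilon_1(\mu_1\oplus\nu_1)+\epsilon_2(\mu_2\oplus\nu_2)}(e_a,e_a)
\]
for any orthonormal basis $\{e_a\}$ of $H^{1}(X,TX)\oplus H^{1}(X,\text{End}E)$. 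So the whole proof is the substitution $\mu_3\oplus\nu_3=\mu_4\oplus\nu_4=e_a$ into (\ref{metric}) followed by the basis sum.

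Since the metric formula of the previous section shows that at the base point there are no cross terms between the $TX$- and $\text{End}E$-directions, I would take $\{e_a\}$ to be the union of a Weil--Petersson orthonormal basis $\{\mu^{(a)}\}$ of $H^{1}(X,TX)$ and an $L^{2}$ orthonormal basis $\{\nu^{(b)}\}$ of the harmonic $\text{End}E$-valued $(0,1)$-forms. The device that drives everything is that, for any operator $A$, one has $\sum_{b}\int_{\Sigma}\tr\!\big((A\nu^{(b)})\wedge\overline{\nu^{(b)}}^{T}\big)=\tr_E(AP_{\text{End}E})$ --- the harmonic forms $\nu^{(b)}$ being an $L^{2}$ orthonormal basis of the range of $P_{\text{End}E}$ --- and likewise the sum against $\{\mu^{(a)}\}$ reproduces $P_{TX}$. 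Hence every summand of (\ref{metric}), after the substitution, becomes the trace of a composition of first--order operators against $P_{\text{End}E}$ or $P_{TX}$; and a summand of (\ref{metric}) pairing a $TX$-slot with an $\text{End}E$-slot receives a contribution only from the sub-basis lying in the corresponding factor.

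I would then collect the lines of (\ref{metric}) one at a time. The lines carrying the factor $\nu_3\wedge\bar\nu_4^{T}$ produce, once the operator products $(\text{ad}\nu_1-\mu_1\partial)\Delta_0^{-1}(-\partial^{*}\bar\mu_2-\star\,\text{ad}\nu_2\star)$ and $\text{ad}\,\Delta_0^{-1}(-\star\,\text{ad}\nu_2\star\nu_1-\star(\partial\mu_1\bar\nu_2^{T})-\star(\bar\partial\bar\mu_2\nu_1))$ are expanded and each piece run against $P_{\text{End}E}$, exactly the six $\tr_E(\,\cdot\,P_{\text{End}E})$ terms of lines two to four of the asserted formula; here one commutes $\star$ past $\partial^{*},\bar\partial^{*}$ by the K\"ahler identities on the surface and keeps the scalar Laplacian $\Delta_0$ exactly where it acts on a function produced by $\partial^{*}$ of a Beltrami differential and the twisted $\Delta_{0,E}$ where it acts on $\text{End}E$-valued forms. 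The scalar line $-i\int_\Sigma\mu_1\bar\mu_2\,\tr\nu_3\wedge\bar\nu_4^{T}$, together with the mixed lines that pair $\mu_3$ with $\bar\nu_4^{T}$ (and $\nu_3$ with $\bar\mu_4$), reassemble --- after inserting the Hodge decomposition of the identity on $\text{End}E$-valued $(1,0)$-forms, which is what introduces the central projection --- into the last line $-i\tr_E(\mu_1\bar P_{\text{End}E}\bar\mu_2 P_{\text{End}E})$. Finally the two purely-$\mu$ lines of (\ref{metric}) collapse under the sum against $\{\mu^{(a)}\}$ to the Teichm\"uller line $i\,\tr\big((\mu_1\bar\mu_2+\mu_1\partial\Delta_0^{-1}\partial^{*}\bar\mu_2)P_{TX}\big)$, essentially Wolpert's formula for the Ricci form of the Weil--Petersson metric, the $(\Delta_0+\tfrac12)^{-1}$ factor present in (\ref{metric}) being absorbed along the way.

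The argument is not conceptually deep once (\ref{metric}) is granted; the real work, and the main obstacle, is the bookkeeping. I expect the three delicate points to be: (i) controlling all the signs while commuting $\star,\partial,\bar\partial,\partial^{*},\bar\partial^{*}$ and taking adjoints and transposes in the $\text{End}E$-factor; (ii) showing that the repeated $\partial\Delta^{-1}(\star[\star\nu_1,\nu_2]-\star(\partial\mu_1\bar\nu_2^{T})-\star(\bar\partial\bar\mu_2\nu_1))$-type blocks occurring in several lines of (\ref{metric}) telescope, which forces one to replace the identity by its Hodge decomposition (harmonic part plus $\bar\partial$-exact and coexact parts) so that the non-harmonic contributions cancel pairwise; and (iii) checking that the mixed contributions really do package into the single clean term $\mu_1\bar P_{\text{End}E}\bar\mu_2 P_{\text{End}E}$ and that the $(\Delta_0+\tfrac12)^{-1}$ term in the $\mu$-sector genuinely drops out. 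A useful final consistency check is that the cohomology class of the resulting form is $\tfrac{n}{2\pi}\omega_{M'}+\tfrac{n^{2}}{12\pi}\omega_{\mathcal T}$, as asserted in Theorem \ref{thmRP}.
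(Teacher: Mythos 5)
Your proposal follows essentially the same route as the paper: write $\text{Ric}^{1,1}=-i\partial\bar\partial\log\det G$, use that the coordinates are K\"ahler normal coordinates so only $\tr\big(G^{-1}\tfrac{d^2}{d\epsilon d\bar\epsilon}G\big)$ survives at the centre, substitute an orthonormal basis into the diagonal slots $\mu_3\oplus\nu_3=\mu_4\oplus\nu_4$ of (\ref{metric}), and convert each integral into an operator trace via $\sum_i g(F\nu,\nu_i)=\tr(FP_{\textnormal{End}E})$ (resp.\ $P_{TX}$), which is exactly the device the paper uses before collecting terms. The remaining work in both is the same bookkeeping, which you identify correctly.
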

\begin{proof}
We denote the martix that represent the metric in our local coordinates by $G$. Since we have K{\"a}hler coordinates we know that $G\vert_{0\oplus0}=I$ and that $\frac{d}{d\epsilon}\vert_{\epsilon=0}G_{\epsilon(\mu\oplus\nu)}=0$ and so
$$\text{Ric}^{1,1}=-i\partial\bar\partial \log\det G$$
and so we need to calculate:
$$\frac{d^2}{d\epsilon d\bar\epsilon}\vert_{\epsilon=0}\log\det G=\tr G^{-1}\frac{d^2}{d\epsilon d\bar\epsilon}\vert_{\epsilon=0} G.$$  
Here only the diagonal terms contribute ($\nu_3=\nu_4$ and $\mu_1=\mu_2$), which are the first three terms in \eqref{metric} and the last two terms. To illustrate how this work we look at the first integral.

We have that $P_{{\rm End}E} \nu=\sum_{i} g(\nu,\nu_i)\nu_i$, since we chose $\nu_i$ to be a orthonormal basis. And so we have that $\sum_{i} g(F(\nu),\nu_i)=\tr(F P_{{\rm End} E})$. This imply that
\begin{align*}
 -i \int_\Sigma \tr((-\mu_1\partial+\text{ad}\nu_1)\Delta_0^{-1}(\partial^*\bar\mu_2-\star\text{ad}\nu_2\star)\nu_3 \wedge\bar\nu_4^T \\=\tr ((-\mu_1\partial+\text{ad}\nu_1)\Delta_0^{-1}(-\partial^*\bar\mu_2-\star\text{ad}\nu_2\star)P_{{\rm End} E}).
\end{align*}
Multiplying out the parethesis we get the last term in the middel three traces in the theorem and additionally the term $\tr ((-\mu_1\partial)\Delta_0^{-1}(-\partial^*\bar\mu_2)P_{{\rm End} E})$, which is a component in the last trace, since 
$$\mu_1P_{\textnormal{End}E}\bar\mu_2=\mu_1\bar\mu_2 Id-((-\mu_1\partial)\Delta_0^{-1}(-\partial^*\bar\mu_2).$$
Repeating with all the remaining integrals and collection together terms we get the idendity for the Ricci form.
\end{proof}
 \section{Ricci potential}
\label{sec:pot}

Recall that the manifold $\M'\cong \T\times M'$ is equipped with the K\"{a}hler structure, where the symplectic structure is a sum of the pull back of two symplectic forms, namely the Weil-Petersen symplectic form on $\mathcal{T}$ and  the Seshadri-Atiyah-Bott-Goldmann symplectic form on $M'$. For each $\sigma\in \T$,  we know that $M'$ equipped with the Seshadri-Atiyah-Bott-Goldmann symplectic form and the complex structure induced by $\sigma$, the Ricci potential is $\log\det \Delta_{\text{Ad}E}$. For Teichm\"{u}ller space, the Ricci potential is $\log\det \Delta_0$ where $\Delta_0$ is the Laplace-operator on function on $\Sigma$ of course depending on $\sigma\in \T$.

In the following, when we vary the determinant of the Laplace operator, we will express it as integrals. For this purpose we use the integral kernel of the Laplace operator on functions with values in Ad$E$. We consider the kernel as an equivariant function on the cover $G:\SetH\times\SetH\to \text{Ad}\numbC^n$. To make the integrals converge, the singularity of $G$ on the diagonal will be cancels by the kernel $Q:\SetH\times\SetH\to\numbC^{n^2-1}$. This is the kernel of the Laplace operator on $\SetH$ with values in $\numbC^{n^2-1}$ and is given by $$ Q(z,z')=-\frac{1}{2\pi}\log\left(\frac{\vert z-z'\vert}{\vert z-\bar z'\vert}\right){\rm Id}_{\numbC^{n^2-1}}.$$ We will also write our integrals over $\Sigma$. This makes sense, since the kernels are defined on $\Sigma$, but an other interpretation is to integrate over a fundamental domain in $\SetH$.

\begin{lemma}
\label{lem:firstderiv}
  The first order derivativs of $\log\det \Delta_{\text{Ad}E}$ in the direction $\nu\in\mathcal{H}^{(0,1)}(X,\text{Ad}E)$ (and its complex conjugate) at $(X,E)\in\mathcal{M}$ are
  \begin{align*}
    \partial_\nu\log\det \Delta_{\text{Ad}E}=-i\int_\Sigma\tr{\rm ad}\nu\wedge\left(\partial'(G(z,z')-Q(z,z'))\right)\vert_{z=z'},\\
\bar\partial_{\bar\nu^T}\log\det \Delta_{\text{Ad}E}=i\int_\Sigma\tr{\rm ad}\bar\nu^T\wedge \left(\partial'(G(z,z')-Q(z,z'))\right)\vert_{z=z'}.
\end{align*}
Moreover, for $\mu\in H^1(X,T)$, we have that
$$
\partial_\mu\log\det \Delta_{\text{Ad}E}=-i\int_\Sigma\mu\tr(\partial\partial'(G(z,z')-Q(z,z')))\vert_{z=z'}.
$$
\end{lemma}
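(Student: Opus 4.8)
\textbf{Proof proposal for Lemma \ref{lem:firstderiv}.}

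The plan is to use the standard variation-of-the-resolvent formula for the logarithm of a functional determinant, combined with the kernel subtraction that makes the diagonal trace finite. Recall that for a family of Laplace-type operators $\Delta^{\epsilon}_{\text{Ad}E}$ one has, via zeta-regularisation, $\frac{d}{d\epsilon}\log\det\Delta^{\epsilon}_{\text{Ad}E}=-\operatorname{Tr}\!\big((\Delta^{\epsilon}_{\text{Ad}E})^{-1}\dot\Delta^{\epsilon}_{\text{Ad}E}\big)$, where the trace is the (regularised) operator trace and the dot is $\frac{d}{d\epsilon}\big|_{\epsilon=0}$. The first step is to transport everything to the fixed surface $X=X_0$ using the connection $L_{\mu_1\oplus\nu_1}$ introduced in Section~\ref{sec:thetaprop}: conjugating $\Delta_{\text{Ad}E}=\bar\partial^*\bar\partial+\bar\partial\bar\partial^*$ by $\operatorname{Ad}\chi_2^{\epsilon(\mu\oplus\nu)}$ and pulling back by $(\chi_1^{\epsilon(\mu\oplus\nu)})_*$, and then applying Lemma~\ref{lem:opdif}, gives $L_{\mu_1\oplus\nu_1}\Delta_{\text{Ad}E}=(L\bar\partial^*)\bar\partial+\bar\partial^*(L\bar\partial)+(L\bar\partial)\bar\partial^*+\bar\partial(L\bar\partial^*)$; since $\Delta_{\text{Ad}E}$ acts on $0$-forms here, only the pieces landing back in $0$-forms survive, so $\dot\Delta_{\text{Ad}E}=\bar\partial^*(\operatorname{ad}\nu-\mu\partial)$ on sections (the $L\bar\partial^*$ term vanishes on $(0,1)$-forms in the relevant order, as in the second equality of Lemma~\ref{lem:opdif}), and analogously $\operatorname{ad}\bar\nu^T$, $\bar\mu$-terms for the conjugate and the $\mu$-direction.

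Next I would write $\operatorname{Tr}\big(\Delta_{\text{Ad}E}^{-1}\dot\Delta_{\text{Ad}E}\big)$ as an integral of the Schwartz kernel of $\Delta_{\text{Ad}E}^{-1}$ along the diagonal, using that the Green's operator has kernel $G(z,z')$ (equivariant on $\SetH\times\SetH$). Because $\dot\Delta_{\text{Ad}E}$ contains one power of $\bar\partial^*$, integration by parts moves a $\partial'$ (or $\partial\partial'$ in the $\mu$-direction) onto the kernel, producing the expression $\tr\,\operatorname{ad}\nu\wedge(\partial' G(z,z'))|_{z=z'}$; but this diagonal restriction is divergent, and the zeta-regularised trace is precisely the finite part obtained by subtracting the corresponding flat local parametrix. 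The reference kernel $Q(z,z')=-\frac{1}{2\pi}\log(|z-z'|/|z-\bar z'|)\operatorname{Id}$ has exactly the same logarithmic diagonal singularity (it is the Green's kernel of the scalar hyperbolic Laplacian tensored with $\operatorname{Id}_{\numbC^{n^2-1}}$, and $\Delta_{\text{Ad}E}$ differs from it only by a smooth zeroth-order and connection term near the diagonal), so $G-Q$ is continuous across the diagonal and $\partial'(G-Q)|_{z=z'}$ is a well-defined smooth section; the regularised trace equals $-i\int_\Sigma \tr\,\operatorname{ad}\nu\wedge\partial'(G-Q)|_{z=z'}$. The factor $-i$ and the wedge come from writing the $L^2$ pairing of $(0,1)$-forms as $i\int \cdot\wedge\overline{(\cdot)}$ and from $\bar\partial^*$ acting as (a multiple of) $-\star\partial\star$; the sign flip between the $\partial_\nu$ and $\bar\partial_{\bar\nu^T}$ formulas is just complex conjugation of this pairing. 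The $\mu$-direction is the same computation with $\dot\Delta_{\text{Ad}E}=-\bar\partial^*\mu\partial$, which after two integrations by parts puts $\partial\partial'$ on $G-Q$ and contributes the scalar factor $\mu$ outside the trace.

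The main obstacle is the analytic justification of the regularisation step: one must show that the naive divergent diagonal integral, when interpreted via the zeta-function (or equivalently heat-kernel) regularisation of $\det\Delta_{\text{Ad}E}$, is genuinely rendered finite by subtracting the $Q$-kernel, and that no anomalous local term is left over from the short-time heat asymptotics. Concretely, one checks that $\Delta_{\text{Ad}E}$ and $\Delta_0\otimes\operatorname{Id}$ have the same heat-coefficient $a_1$ density up to an exact/total-derivative term (because $\operatorname{Ad}E$ carries a flat unitary connection, so the curvature contribution vanishes), which guarantees that the difference of the regularised traces is the integral of the continuous kernel difference with no extra $a_1$-type contribution; this is the one place where the flatness of $\rho_E$ and the choice of the hyperbolic metric enter essentially. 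The remaining steps — the two integrations by parts, the bookkeeping of $\star$ operators and the $i$'s, and the equivariance needed to descend the integral from $\SetH$ to $\Sigma$ — are routine. I would carry them out in the order: (i) variation formula and transport to $X_0$ via $L$; (ii) express trace as diagonal kernel integral and integrate by parts; (iii) insert and subtract $Q$, invoking the heat-coefficient comparison to legitimise the finite answer; (iv) repeat verbatim for the $\bar\nu^T$ and $\mu$ directions.
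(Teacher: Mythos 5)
Your proposal takes a genuinely different route from the paper. The paper's proof is essentially by reduction to the literature: the first formula is quoted from Lemma~3 of \cite{ZTVB} (valid because the coordinates of this paper agree with the Takhtajan--Zograf coordinates to first order along the fibre over $X$), the second follows from the first simply because $\log\det\Delta_{\text{Ad}E}$ is real (self-adjointness of $\Delta_{\text{Ad}E}$), and the $\mu$-direction formula is obtained by redoing the Selberg-zeta-function computation of Lemma~3 of \cite{ZTpuncRie} with $Z(\rho_\SetH,\rho_{\text{Ad}E},s)$, using that the coordinates agree with Bers' coordinates on $\mathcal{T}\times\{\rho_E\}$. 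You instead attempt a self-contained analytic derivation from the variation of the zeta-regularised determinant. That is a legitimate strategy --- it is in effect a re-derivation of the cited lemmas --- and your step (i) (transport to $X_0$ via $L_{\mu_1\oplus\nu_1}$ and Lemma~\ref{lem:opdif}) and the reality/conjugation argument for the second formula match the paper's logic. You could also have obtained the first two formulas much more cheaply by the paper's observation that the coordinate change to the Takhtajan--Zograf coordinates is second order (the extra term $-\tfrac12\tilde g_X^{-1}\tr\nu\otimes\nu$ is quadratic), so first derivatives are unaffected.

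The genuine gap is in your step (iii). The identity
\(\delta\log\det\Delta=-\operatorname{Tr}_{\mathrm{reg}}(\Delta^{-1}\delta\Delta)\)
followed by ``the regularised trace is precisely the finite part obtained by subtracting $Q$'' is exactly the content of the lemmas being proved, not an input to them. Two things are missing. First, for a zeta-regularised determinant the naive variational formula acquires a local anomaly governed by the variation of $\zeta(0)$ (equivalently the $a_1$ heat coefficient); in the $\nu$-direction this indeed drops out because the metric and principal symbol are fixed and the connection is flat unitary, but in the $\mu$-direction the conformal structure varies and the heat densities vary with it, so your claim that ``no anomalous local term is left over'' is precisely what must be computed --- this is why the paper (and \cite{ZTpuncRie}) resort to the Selberg zeta function, where $\det\Delta$ has an explicit expression and the variation can be differentiated term by term. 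Second, even granting finiteness of $\partial'(G-Q)|_{z=z'}$, finiteness does not pin down the value: $Q$ is the Dirichlet Green's kernel of the \emph{Euclidean} Laplacian on $\SetH$, not a parametrix of $\Delta_{\text{Ad}E}$ adapted to the hyperbolic metric, and replacing $Q$ by any other kernel with the same logarithmic singularity would change the right-hand side by a finite local integral. Your heat-coefficient comparison addresses convergence but not this normalisation, so the proposed proof does not yet establish the stated equalities without importing the explicit computations of \cite{ZTVB} and \cite{ZTpuncRie} --- which is what the paper does.
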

\begin{proof}
The first equation follows from \cite[Lemma 3]{ZTVB}, since the coordinates agree, when we stay in the fiber over $X$. The second equation follows since $\log\det \Delta_{\text{Ad}E}$ is real by the self adjointness of $\Delta_{\text{Ad}E}$. 

The last equation we can calculated similar to the verification of \cite[Lemma 3]{ZTpuncRie}, keeping in mind that we need to work with the Selberg zeta function $Z(\rho_\SetH,\rho_{\text{Ad}E},s)$ and using our coordinates agree with Bers' coordinates on $\mathcal{T}\times{\rho_E}$.
\end{proof}

Now the second order derivatives can be calculated as follows.
\begin{thm}
  Second order variation of $\log\det \Delta_{{\rm Ad}E}$ are
  \begin{align*}
    \bar\partial_{\bar\nu_2^T}\partial_{\nu_1} \log\det \Delta_{{\rm Ad} E}&=\tr(({\rm ad}(\Delta_{0,E}^{-1}\star[\star\nu_1,\nu_2]-{\rm ad}\nu_1\Delta_{0,E}^{-1}\star {\rm ad}\nu_2\star)P_{\textnormal{End}E})\\\quad&-\frac{2ni}{2\pi}\omega_{M'}(\nu_1,\nu_2),\\
   \bar\partial_{\bar\nu_2^T}\partial_{\mu_1} \log\det \Delta_{{\rm Ad} E}&=\tr(({\rm ad}(\Delta_{0,E}^{-1}\bar\partial^*\mu_1\bar\nu_2^T)+\mu\partial\Delta_{0,E}^{-1}\star{\rm ad}\nu_2\star)P_{\textnormal{End}E}), \\
   \bar\partial_{\bar\mu_2}\partial_{\mu_1} \log\det \Delta_{{\rm Ad} E}&=-\tr(\mu_1 \bar P_{{\rm End}E}\bar\mu_2P_{{\rm End}E})-\frac{(n^2-1)i}{6\pi}\omega_{\mathcal{T}}(\mu_1,\mu_2).
  \end{align*}
\end{thm}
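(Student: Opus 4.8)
The plan is to compute the three mixed second derivatives of $\log\det\Delta_{\mathrm{Ad}E}$ by differentiating once more the first-order formulas of Lemma~\ref{lem:firstderiv}, and then recognize the answer in terms of the operator-derivative machinery of Section~\ref{sec:thetaprop}. The starting point is the well-known fact, due to Takhtajan--Zograf, that on each fiber over a fixed Riemann surface the second $\nu$-variation of $\log\det\Delta_{\mathrm{Ad}E}$ produces the Quillen/Atiyah--Bott--Goldman curvature together with a Laplace-resolvent term; concretely, differentiating $\partial_\nu\log\det\Delta_{\mathrm{Ad}E}=-i\int_\Sigma\tr\,\mathrm{ad}\nu\wedge(\partial'(G-Q))|_{z=z'}$ in the $\bar\nu_2^T$ direction one must differentiate both the integrand $\mathrm{ad}\nu$ and the kernel $G$. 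The variation of $G$ is governed by the variation of $\Delta_{\mathrm{Ad}E}$ itself; using Lemma~\ref{lem:opdif} we have $\bar L_{0\oplus\nu_2}\bar\partial^*=-\star\mathrm{ad}\nu_2\star$ and $L_{0\oplus\nu_1}\bar\partial=\mathrm{ad}\nu_1$, so $\bar L_{\nu_2}L_{\nu_1}\Delta_{\mathrm{Ad}E}$ is expressed through $\mathrm{ad}\nu_1$, $\star\,\mathrm{ad}\nu_2\star$ and their commutator-type combination $\star[\star\nu_1,\nu_2]$; sandwiching with $\Delta_{0,E}^{-1}$ and taking the relevant trace against $P_{\mathrm{End}E}$ yields exactly the term $\mathrm{ad}(\Delta_{0,E}^{-1}\star[\star\nu_1,\nu_2])-\mathrm{ad}\nu_1\Delta_{0,E}^{-1}\star\mathrm{ad}\nu_2\star$. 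The remaining piece of the derivative reassembles, by the Takhtajan--Zograf local index computation, into $-\tfrac{2ni}{2\pi}\omega_{M'}(\nu_1,\nu_2)$, where the factor $n$ (rather than $n^2-1$) enters because $\mathrm{Ad}E$ has rank $n^2-1$ but the first Chern class computation in the anomaly formula carries a $\mathrm{rank}$-versus-$c_1$ balance that for $\mathrm{Ad}E$ with $\det=\mathcal O$ gives the dual Coxeter-type coefficient $n$; this is precisely where one invokes the known Ricci-potential statement on $M'$ recalled at the start of Section~\ref{sec:pot}.

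For the mixed $\mu$--$\nu$ derivative, I would differentiate the third formula of Lemma~\ref{lem:firstderiv}, $\partial_\mu\log\det\Delta_{\mathrm{Ad}E}=-i\int_\Sigma\mu\,\tr(\partial\partial'(G-Q))|_{z=z'}$, in the $\bar\nu_2^T$ direction. Now only the kernel $G$ varies (the Beltrami factor $\mu$ is held, and the measure contribution is second order), so the computation reduces entirely to $\bar L_{\nu_2}L_{\mu}\Delta_{\mathrm{Ad}E}$. From Lemma~\ref{lem:opdif} we read off $L_{\mu_1\oplus 0}\bar\partial=-\mu\partial$ and $\bar L_{0\oplus\nu_2}\bar\partial^*=-\star\mathrm{ad}\nu_2\star$, plus the cross term coming from $L_\mu\bar\partial^*=-\partial^*\bar\mu$ interacting with $\bar L_{\nu_2}\bar\partial=\mathrm{ad}\bar\nu_2^T$-type variations; assembling these with one resolvent $\Delta_{0,E}^{-1}$ inserted between the two variations gives $\mathrm{ad}(\Delta_{0,E}^{-1}\bar\partial^*\mu_1\bar\nu_2^T)+\mu\partial\Delta_{0,E}^{-1}\star\mathrm{ad}\nu_2\star$, traced against $P_{\mathrm{End}E}$. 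There is no anomaly/cohomological term here, consistent with the fact that $\omega_{\mathcal M'}$ is a direct sum and the Quillen metric has no curvature component pairing the Teichm\"uller directions with the $M'$ directions at this order.

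For the pure $\mu$--$\mu$ derivative, the cleanest route is to match against the classical Zograf--Takhtajan formula: differentiating $\partial_\mu\log\det\Delta_{\mathrm{Ad}E}$ in $\bar\mu_2$ one gets, by exactly the computation behind \cite[Lemma 3]{ZTpuncRie} but for the twisted Selberg zeta $Z(\rho_\SetH,\rho_{\mathrm{Ad}E},s)$, a term proportional to the Weil--Petersen form with coefficient counting the rank $n^2-1$ of $\mathrm{Ad}E$ — namely $-\tfrac{(n^2-1)i}{6\pi}\omega_{\mathcal T}(\mu_1,\mu_2)$ (the $1/6\pi$ being $1/12\pi$ doubled, as $\log\det\Delta_0$ already accounts for one copy) — plus the $P_{\mathrm{End}E}$-projected term $-\tr(\mu_1\bar P_{\mathrm{End}E}\bar\mu_2 P_{\mathrm{End}E})$ arising from the variation of the harmonic projection in the $\mathrm{End}E$ factor, exactly as the analogous term $-\tr(\mu_1\bar\mu_2 + \mu_1\partial\Delta_0^{-1}\partial^*\bar\mu_2)P_{TX}$ appears in the untwisted case. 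I expect the main obstacle to be bookkeeping the precise numerical coefficients in the anomaly terms: one must carefully track (i) the rank of $\mathrm{Ad}E$ versus $\mathrm{End}E$ and the effect of the $\det=\mathcal O$ restriction, (ii) the factor $2$ discrepancies between $\partial\bar\partial$-conventions and $(1,1)$-form normalizations, and (iii) the reduction of the Selberg-zeta local index term to a multiple of $\omega_{\mathcal T}$. The structural identification of the non-topological terms via Lemmas~\ref{lemchiprop}, \ref{lem:opdif} is comparatively routine once the operator variations are in hand; it is the local-index/Selberg-zeta coefficient that requires care, and here I would lean on the cited Takhtajan--Zograf computations, adapting them verbatim with $\rho_E$ replaced by the adjoint representation.
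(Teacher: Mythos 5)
Your treatment of the mixed derivative $\bar\partial_{\bar\nu_2^T}\partial_{\mu_1}\log\det\Delta_{\mathrm{Ad}E}$ has a genuine gap. You assert that, after writing $\partial_{\mu_1}\log\det\Delta_{\mathrm{Ad}E}=-i\int_\Sigma\mu_1\tr(\partial\partial'(G-Q))\vert_{z=z'}$, ``only the kernel $G$ varies'', so that the answer is captured entirely by the operator variation $\bar L_{\nu_2}L_{\mu}\Delta_{\mathrm{Ad}E}$. That is not the case: at the nearby point $0\oplus\epsilon\nu_2$ the coordinate direction $\mu_1$ is represented under the Kodaira--Spencer map by $\mu_1\oplus P_{\textnormal{End}E}\,\mathrm{Ad}\chi_2^{0\oplus\epsilon\nu_2}\bigl(\mu_1(\chi_2^{0\oplus\epsilon\nu_2})^{-1}\partial\chi_2^{0\oplus\epsilon\nu_2}\bigr)$, i.e.\ it acquires an $\mathrm{End}E$-component, so the first-derivative formula at that point contains an additional integral of the fiberwise type against this component. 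Its $\bar\epsilon$-derivative is nonzero precisely because $\frac{d}{d\epsilon}\vert_{\epsilon=0}\partial\chi_2^{\epsilon(\mu\oplus\nu)}=-\bar\nu^T$ (Lemma \ref{lemchiprop}), and after writing $P_{\textnormal{End}E}(\mu_1\bar\nu_2^T)=(I-\bar\partial\Delta_{0,E}^{-1}\bar\partial^*)(\mu_1\bar\nu_2^T)$ and moving the $\bar\partial$ past the wedge product, this is exactly what produces the term $\mathrm{ad}(\Delta_{0,E}^{-1}\bar\partial^*\mu_1\bar\nu_2^T)$ in the stated formula. The substitute mechanism you propose --- a ``cross term'' from $L_{\mu}\bar\partial^*$ interacting with $\bar L_{\nu_2}\bar\partial$ --- cannot supply it: by Lemma \ref{lem:opdif} the holomorphic variation of $\bar\partial^*$ and the antiholomorphic variation of $\bar\partial$ both vanish, so that cross term is identically zero. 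Without the variation of the Kodaira--Spencer representative you would obtain only the $\mu_1\partial\Delta_{0,E}^{-1}\star\mathrm{ad}\nu_2\star$ piece.

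The other two formulas are handled essentially as in the paper: the $\nu$--$\nu$ derivative is simply imported from \cite[Theorem 2]{ZTVB} (justified because the coordinates agree with theirs up to a second-order holomorphic change, rather than by re-deriving the coefficient $2n$ from a dual-Coxeter heuristic), and for the $\mu$--$\mu$ derivative your identification of the $(n^2-1)$ coefficient with the rank of $\mathrm{Ad}E$ matches the paper's computation $\tr L_{\mu_2\oplus0}\partial\partial'Q(z,z')=\bar\mu_2(n^2-1)/(12\pi y^2)$. Note, however, that in the paper this term and $-\tr(\mu_1\bar P_{\textnormal{End}E}\bar\mu_2P_{\textnormal{End}E})$ both come out of one splitting of $\bar L_{\mu_2\oplus0}(\partial\partial'(G-Q))$, together with a Stokes argument showing that the contribution of $\bar L_{\mu_2\oplus0}\mu_1=\bar\partial\Delta^{-1}\partial^*(\bar\mu_2\mu_1)$ vanishes --- another instance of the variation of the Kodaira--Spencer representative that your outline does not account for (here it happens to contribute zero, but that must be checked, not assumed).
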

\begin{proof}
  The formula for $\bar\partial_{\bar\nu_2^T}\partial_{\nu_1} \log\det \Delta_{{\rm Ad} E}$ follows from \cite[Theorem 2]{ZTVB}, since the coordiantes agree with their coordinats up to a second order holomorphic coordinate change.

For the second equation recall that $\mu^{0\oplus\epsilon\nu}$ is represented by $$\mu\oplus P_{{\rm End}E}^{0\oplus\epsilon\nu}{\rm Ad}\Psi^{0\oplus\epsilon\nu}_2(\mu(\Psi_2^{0\oplus\epsilon\nu})^{-1}\partial\Psi_2^{0\oplus\epsilon\nu})$$ and so we have from Lemma \ref{lem:firstderiv} that
\begin{align*}
 i \bar\partial_{\bar\nu_2^T}\partial_{\mu_1}& \log\det \Delta_{{\rm Ad} E}\\ &=\frac{d}{d\bar\epsilon}\vert_{\epsilon=0}\Big(\int_\Sigma \mu_1 \tr(\partial\partial'(G(z,z')-Q(z,z'))\vert_{z=z'}\\ &\quad+\int_\Sigma  {\rm ad}P_{\textnormal{End}E}{\rm Ad}\chi^{0\oplus\epsilon\nu}_2(\mu(\chi^{0\oplus\epsilon\nu})_2^{-1}\partial\chi^{0\oplus\epsilon\nu}_2)\\ &\qquad\wedge(\partial'(G(z,z')-Q(z,z')))\vert_{z=z'}\Big).
\end{align*}
Now conjucation by $\chi_2^{0\oplus\epsilon\nu}(z)$ under the trace and moving under the evaluation on the diagonal with different variables we find that
\begin{align*}
 i \bar\partial_{\bar\nu_2^T}\partial_{\mu_1}& \log\det \Delta_{{\rm Ad} E}\\ &=\frac{d}{d\bar\epsilon}\vert_{\epsilon=0}\Big(\int_\Sigma \mu_1 \tr(\chi_2^{0\oplus\epsilon\nu}(z)\chi_2^{0\oplus\epsilon\nu}(z')^{-1}\partial\partial'(G(z,z')-Q(z,z'))\vert_{z=z'}\\ &\quad+\int_\Sigma  {\rm ad}P_{\textnormal{End}E}{\rm Ad}\chi^{0\oplus\epsilon\nu}_2(\mu(\chi^{0\oplus\epsilon\nu})_2^{-1}\partial\chi^{0\oplus\epsilon\nu}_2)\\ &\qquad\wedge(\partial'(G(z,z')-Q(z,z')))\vert_{z=z'}\Big).
\end{align*}
Now the second term vanish unless we differentiate $\partial\chi^{0\oplus\epsilon\nu}_2$ and for the first term we have that we need to consider 
$$-i\chi_2^{0\oplus\epsilon\nu}(z)\partial(\chi_2^{0\oplus\epsilon\nu}(z))^{-1}\chi_2^{0\oplus\epsilon\nu}(z)\chi_2^{0\oplus\epsilon\nu}(z')^{-1}\partial'(G(z,z')-Q(z,z')).$$ 
By Lemma \ref{lem:opdif} we find that the $\bar\epsilon$ derivative of $\chi_2^{0\oplus\epsilon\nu}(z)\partial(\chi_2^{0\oplus\epsilon\nu}(z))^{-1}$ is ${\rm ad}\bar\nu_2^T\partial'(G(z,z')-Q(z,z'))$. For $\chi_2^{0\oplus\epsilon\nu}(z)\chi_2^{0\oplus\epsilon\nu}(z')^{-1}\partial'(G(z,z')-Q(z,z'))$ an explicit calculation like in the proof of \cite[Theorem 1]{ZTVB} give that  $$\chi_2^{0\oplus\epsilon\nu}(z)\chi_2^{0\oplus\epsilon\nu}(z')^{-1}\partial'Q(z,z')=\frac{{\rm ad}\bar\nu_2^T}{2\pi}$$ which has trace $0$ and so the variation of the term $-i\chi_2^{0\oplus\epsilon\nu}(z)\chi_2^{0\oplus\epsilon\nu}(z')^{-1}\partial'G(z,z')$ is finite and by the relations in Lemma \ref{lem:opdif} we find the $\bar\epsilon $-derivative is $-\int_\Sigma G(z,z'')\star{\rm ad}\nu_2(z'')\star P(z'',z)^{0,1}$, which is the kernel of $-\Delta_0^{-1}\star{\rm ad}\nu_2\star P_{{\rm End}E}$. This leads us to
\begin{align*}
  i\bar\partial_{\bar\nu_2^T}\partial_{\mu_1} \log\det \Delta_{{\rm Ad} E} &=\left(\int_\Sigma\right. \mu \tr({\rm ad}\bar\nu_2^T\wedge\partial'(G(z,z')-Q(z,z')\\ &\quad+\partial\int_\Sigma G(z,z'')\star{\rm ad}\nu_2(z'')\star P(z'',z)^{0,1})\vert_{z=z'}\\ &\quad-\left.\int_\Sigma {\rm ad} P_{\textnormal{End}E}(\mu\bar\nu_2^T)\wedge(\partial'(G(z,z')-Q(z,z')))\vert_{z=z'}\right).
\end{align*}
Now using that $P_{\textnormal{End}E}(\mu\bar\nu_2^T)= (I-\bar\partial\Delta_{0,E}^{-1}\bar\partial^*\mu\bar\nu_2^T)$ and that $\partial\int_\Sigma G(z,z'')\star{\rm ad}\nu_2(z'')\star P(z'',z)^{0,1})\vert_{z=z'}$ is finite on the diagonal, we find that
\begin{align*}
  \bar\partial_{\bar\nu_2^T}\partial_{\mu_1} \log\det \Delta_{{\rm Ad} E} &=+i\int_\Sigma {\rm ad}(\bar\partial\Delta_{0,E}^{-1}\bar\partial^*\mu\bar\nu_2^T)\wedge(\partial'(G(z,z')-Q(z,z')))\vert_{z=z'}\\ &\quad+\tr(\mu\partial\Delta_{0,E}^{-1}\star{\rm ad}\nu_2\star P_{\textnormal{End}E}).
\end{align*}
Now we can move $\bar\partial$-operator in the first term past the wedge product to get a term of the form
\begin{align*}
  \bar\partial(\partial(G-Q))\vert_{z=z'}=((\bar\partial+\partial)\partial'(G-Q))\vert_{z=z'}.
\end{align*}
In \cite{ZTVB} the Q terms are calculated and shown to be a multiple of $I_{{\rm Ad}E}$ and further we have that $-i\bar\partial'\partial' G=0$, since there are no holomorphic sections of ${\rm Ad}E$. Finally $-\bar\partial\partial' G=P(z,z')$ when $z\neq z'$. And so we get that
\begin{align*}
  \bar\partial_{\bar\nu_2^T}\partial_{\mu_1} \log\det \Delta_{{\rm Ad} E}=\tr((\Delta_{0,E}^{-1}\bar\partial^*\mu\bar\nu_2^T)P_{\textnormal{End}E}+\mu\partial\Delta_{0,E}^{-1}\star{\rm ad}\nu_2\star P_{\textnormal{End}E}).
\end{align*}

Finally $\bar\partial_{\bar\mu_2}\partial_{\mu_1} \log\det \Delta_{{\rm Ad} E}$ can be calculated as follows
\begin{align*}
  \bar\partial_{\bar\mu_2}\partial_{\mu_1} \log\det \Delta_{{\rm Ad} E} &=\frac{d}{d\bar\epsilon}\vert_{\epsilon=0}\int_\Sigma P_{TX}\left((\chi_1^{\epsilon\mu_2\oplus0})^{-1}_*\frac{\mu_1}{1-\vert\mu\vert^2}\right) \\ &\quad\tr(\partial^{\epsilon\mu_2\oplus0}(\partial')^{\epsilon\mu_2\oplus0}(G^{\epsilon\mu_2\oplus0}(z,z')-Q^{\epsilon\mu_2\oplus0}(z,z'))\vert_{z=z'}\\ &+\frac{d}{d\bar\epsilon}\vert_{\epsilon=0}\int_\Sigma  P_{\textnormal{End}E}\left((\chi_1^{\epsilon\mu_2\oplus0})^{-1}_*\text{Ad}\chi_2^{\epsilon\mu_2\oplus0}\mu_1\chi_2^{\epsilon\mu_2\oplus0}\partial_\SetH\chi_2^{\epsilon\mu_2\oplus0}\right)\\ &\qquad\wedge(\partial'(G(z,z')-Q(z,z')))\vert_{z=z'}.
\end{align*}
From \ref{lemchiprop} we have that the second term vanish since both $\partial\chi_2^{\epsilon\mu_2\oplus0}$ and $\frac{d}{d\bar\epsilon}\vert_{\epsilon=0}\partial\chi_2^{\epsilon\mu_2\oplus0}$ vanish. For the first term we make a change of coordinates with $(\chi_1^{\mu_2\oplus0})^{-1}$, then $\partial^{\epsilon\mu_2\oplus0}(\partial')^{\epsilon\mu_2\oplus0}(G^{\epsilon\mu_2\oplus0}(z,z')-Q^{\epsilon\mu_2\oplus0}(z,z'))$ becomes $$(\chi_1^{\mu_2\oplus0})^{-1}_*\partial\partial'(G(\chi_1^{\mu_2\oplus0}(z),\chi_1^{\mu_2\oplus0}(z'))-Q(\chi_1^{\mu_2\oplus0}(z),\chi_1^{\mu_2\oplus0}(z'))).$$ 
After that we conjugate by $\chi_2^{\epsilon\mu_2\oplus0}(z)$ under the trace and move them under the evalutation on the diagonal with different variable, this gives when we do the $\bar\epsilon$ differentiation $\bar L_{\mu_2\oplus0}$ of the kernel the following formula
\begin{align*}
i\bar\partial_{\bar\mu_2}\partial_{\mu_1} \log\det \Delta_{{\rm Ad} E}&=\int_\Sigma \mu_1\tr(\bar L_{\mu_2\oplus0}(\partial\partial'(G(z,z')-Q(z,z')))\vert_{z=z'}\\ &+\int_\Sigma (\bar L_{\mu_2\oplus0}P_{TX}\left((\chi_1^{\epsilon\mu_2\oplus0})^{-1}_*\frac{\mu_1}{1-\vert\mu\vert^2}\right)) \tr(\partial\partial'(G(z,z')-Q(z,z'))\vert_{z=z'}
\end{align*}

We then find that $\tr L_{\mu_2\oplus 0} \partial\partial' Q(z,z')=\frac{\bar\mu_2(n^2-1)}{12 \pi y^2}$, since our $Q$ is just $I_{{\rm Ad}E}$ times the $Q$ from \cite[section 4.4]{ZTRie}, where the computation is done. This means that $L_{\mu_2\oplus 0} -i\partial\partial' G(z,z')$ is finite as well and since it is the kernel of the operator $\partial\Delta_{0,E}^{-1}\bar\partial^*$, we see that the variation is $$L_{\mu_2\oplus 0} -i\partial\partial'G(z,z')=\bar\mu_2(I-P_{\textnormal{End}E})+\partial\Delta_{0,E}^{-1}\partial^*\bar\mu_2 P_{\textnormal{End}E}=\mu_2 I-(\bar P_{{\rm End}E}\bar\mu_2P_{{\rm End}E}).$$ Since we know $ \int_\Sigma \mu_1 \tr(\bar L_{\mu_2\oplus0}(\partial\partial'(G(z,z'))\vert_{z=z'}$ is finite, we conclude that $$-i\int_\Sigma\mu_1 \tr(\bar L_{\mu_2\oplus0}(\partial\partial'(G(z,z'))\vert_{z=z'})=\tr(\mu_1\bar P_{{\rm End}E}\bar\mu_2 P_{\textnormal{End}E}).$$ 
Finally, we have the term with 
\begin{align*}
\bar L_{\mu_2\oplus0}\mu_1&=\frac{d}{d\bar\epsilon}\vert_{\epsilon=0}P_{TX}\left(\frac{\mu_1}{1-\vert\mu-\frac{1}{2}\tilde g_X^{-1}\tr\nu^2\vert^2}\frac{\partial\chi_1^{\mu\oplus\nu}}{\overline{\partial\chi_1^{\mu\oplus\nu}}}\right)\circ(\chi_1^{\mu\oplus\nu})^{-1}\\ &=\bar\partial\Delta^{-1}\partial^*(\bar\mu_2\mu_1).
\end{align*} 
We can now use Stokes theorem to move the $\bar\partial$ from this term to $\partial\partial' (G-Q)\vert_{z=z'}$ and we get that
$$(\bar\partial+\bar\partial')\partial\partial' (G-Q)\vert_{z=z'}=(\partial P_{\textnormal{End}E}(z,z'))\vert_{z=z'}=0,$$ 
since the harmonic forms are in the kernel of $\partial$. Now gathering the terms we get the last formula.
\end{proof}
For the function $\log\det \Delta_0$, we see that there is a holomorphic coordinate change of second order from the Bers coordinates used in \cite{ZTpuncRie} to the relevant part of our holomorphic coordinates (modulo second order) on the universal moduli space and so $\bar\partial\partial \log\det \Delta_0$ can be given in our coordinates as follows
\begin{align*}
  \bar\partial_{\mu_2\oplus\nu_2}\partial_{\mu_1\oplus\nu_1} \log\det \Delta_0=\frac{i}{6\pi}\omega_{\mathcal{T}}(\mu_1,\mu_2)-\tr((\mu_1\bar\mu_2-\mu_1\partial\Delta_0^{-1}\partial^*\bar\mu_2)P_{TX}).
\end{align*}
From this formula we get Theorem \ref{thmRP}.

\end{document}